\numberwithin{equation}{section}
\newtheorem{theorem}{Theorem}[section]
\newtheorem{remark}[theorem]{Remark}
\newtheorem{proposition}[theorem]{Proposition}
\newtheorem{definition}[theorem]{Definition}
\newtheorem{The main theorem}[theorem]{The main theorem}
\theoremstyle{definition}
\begin{document}

\begin{frontmatter}




\title{Approximation of plurifinely plurisubharmonic functions}


\author[label1]{Nguyen Van Trao}
\address[label1]{Department of Mathematics, Hanoi National University of Education, Hanoi, Vietnam}  \ead{ngvtrao@yahoo.com}

\author[label3]{Hoang Viet}\address[label3]{Vietnam Education Publishing House, Hanoi, Vietnam}  \ead{viet.veph@gmail.com}

\author[label1]{Nguyen Xuan Hong \fnref{label1a}} 
 \fntext[label1a]{This work is finished during the third  author's post-doctoral fellowship of the Vietnam Institute for Advanced Study in Mathematics. He wishes to thank this institutions for their kind hospitality and support.}
\ead{xuanhongdhsp@yahoo.com}

\begin{abstract}
In this paper, we study   the approximation of negative plurifinely plurisubharmonic function defined on a plurifinely domain  by an increasing sequence of plurisubharmonic functions defined in   Euclidean domains.
\end{abstract}

\begin{keyword}

plurifinely pluripotential theory   \sep plurifinely plurisubharmonic functions


 \MSC[2010] 32U05 \sep 32U15
\end{keyword}

\end{frontmatter}



\section{Introduction}
Approximation is   one of the most important tools in analysis.
Let  $\Omega$ be an Euclidean open set of  $\mathbb C^n$ and let $u$ be  a plurisubharmonic function on $\Omega$.
The problem of finding   characterizations of $u$ and $\Omega$ such that $u$ can be approximated uniformly on $\overline{\Omega}$  by a sequence of smooth plurisubharmonic functions defined  on Euclidean neighborhoods of  $\Omega$ is classical. 
The theorem of  Forn\ae ss and  Wiegerinck  \cite{FW89} asserts that it is always possible if $\Omega$  is bounded  domain with  $\mathcal C^1$-boundary and  $u$ is continuous on $\overline{\Omega}$. Recently,  Avelin,  Hed and   Persson \cite{AHP}  extended this result to domains with boundaries locally given by graphs of continuous functions.
Therefore, it makes sense not only to ask for which domains $\Omega$ such an approximation is  possible, but to ask for a characterization of those plurisubharmonic functions $u$ that can be monotonically approximated from outside. 
According to the results by \cite{Be06}, \cite{Be11}, \cite{CH08}, \cite{Hed10} and 
the third  author, this is possible if the  domain $\Omega$ has  the $\mathcal F$-approximation property and  $u$  belongs to the Cegrell's classes in $\Omega$.

The aim of this paper is to study approximation   of  $\mathcal F$-plurisubharmonic function.
More specifically,
let $u$ be a negative $\mathcal F$-plurisubharmonic function in $\mathcal F$-domain $\Omega$. We concern with sufficient conditions on $u$ and  $\Omega$ such that $u$ can be approximated  by an increasing  sequence of   plurisubharmonic functions defined  on Euclidean neighborhoods of  $\Omega$.  
It is not surprising that we need some kind of $\Omega$ and $u$ in analogy with the  set up to make the approximation possible. Namely, we prove the following. 

\begin{theorem} \label{the1}
Let  $\Omega$ be a bounded $\mathcal F$-hyperconvex domain and let  $\{\Omega_{j}\}$ be a decreasing sequence of  bounded hyperconvex domains such that $\Omega \subset \Omega_{j+1} \subset \Omega_j$, for all $j\geq 1$.  Assume that  there exists  $\rho \in\mathcal E_0(\Omega)$,  $\rho_j\in PSH^-(\Omega_j)$  with  $\rho_j\nearrow \rho$ a.e. in $\Omega$. Then, for  every $p>0$ and for  every $u\in\mathcal F_p(\Omega)$, there exists an increasing sequence of functions $u_j\in PSH^-(\Omega_j)$ such that $u_j\to  u$ a.e. in $\Omega$.
\end{theorem}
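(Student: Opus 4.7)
The plan is to reduce to the bounded case and then build negative plurisubharmonic approximants on the larger domains $\Omega_j$ by an envelope construction driven by the given exhaustion $\rho$ and its approximants $\rho_j$. Since $u\in\mathcal F_p(\Omega)$, the defining property of the Cegrell class writes $u$ as an a.e.\ decreasing limit of bounded $\varphi_k\in\mathcal E_0(\Omega)$ with uniformly bounded $p$-energies, so it suffices to prove the approximation for each bounded $\varphi_k$ and then combine the resulting families diagonally.

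For a bounded $\varphi\in\mathcal E_0(\Omega)$, I would define
$$\varphi_j \;:=\; \bigl(\sup\{\,v\in PSH^-(\Omega_j)\,:\, v\le\varphi\ \mathcal F\text{-q.e.\ in }\Omega\,\}\bigr)^{*}.$$
Because $\Omega_{j+1}\subset\Omega_j$ the PSH constraint relaxes as $j$ grows, so $\varphi_j\in PSH^-(\Omega_j)$ and the sequence is automatically increasing in $j$. Boundedness of $\varphi$ together with $\rho_j\nearrow\rho$ ensures that a suitable multiple of $\rho_j$ sits in the admissible family, so $\varphi_j$ is non-trivial and in fact $C\rho_j\le\varphi_j$ for some $C$ depending on $\sup_\Omega|\varphi|$ and $\inf_\Omega|\rho|$.

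The crucial step is to identify the limit $\varphi_\infty:=\lim_j\varphi_j$ with $\varphi$ a.e.\ in $\Omega$. The inequality $\varphi_\infty\le\varphi$ is immediate, while for the reverse I would use the perturbed candidate $\varphi+\varepsilon\rho\in\mathcal E_0(\Omega)$: an $\mathcal F$-approximation-style gluing, combining $\varphi+\varepsilon\rho$ on a plurifine neighbourhood of a large compact subset of $\Omega$ with $\varepsilon\rho_j$ outside, should produce a competitor PSH function on $\Omega_j$ lying below $\varphi$ in $\Omega$ and converging to $\varphi+\varepsilon\rho$ as $j\to\infty$; taking $j\to\infty$ and then $\varepsilon\to 0$ forces $\varphi_\infty\ge\varphi$ a.e. Finally, with approximants $\varphi_{k,j}\in PSH^-(\Omega_j)$ for each $k$ in hand, I would set $u_j:=\max_{1\le k\le j}(\varphi_{k,j}+2^{-k}\rho_j)$ and verify monotonicity in $j$ using $\varphi_{k,j}\nearrow\varphi_k$ and $\varphi_k\searrow u$, together with a.e.\ convergence to $u$.

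The main obstacle I expect is the identification $\varphi_\infty=\varphi$: the direction $\le$ is automatic, but the direction $\ge$ requires actually producing enough negative PSH functions on the Euclidean open sets $\Omega_j$ that sit below $\varphi$ inside $\Omega$ while exhausting $\varphi$ in the limit. This is precisely where the $\mathcal F$-hyperconvexity of $\Omega$, the Cegrell-class hypothesis on $(\rho,\rho_j)$, and the energy bound built into $\mathcal F_p(\Omega)$ must be used simultaneously, and it is the heart of the plurifine input in the proof.
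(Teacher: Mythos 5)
Your strategy (maximal subextension envelopes on the $\Omega_j$, à la Cegrell--Hed) is a legitimate alternative to the paper's route, which instead solves the weighted Monge--Amp\`ere equation $(1+(-u_j)^p)(dd^cu_j)^n=\mathbf 1_{\Omega\cap\{u>-\infty\}}(1+(-u)^p)(dd^cu)^n$ on each $\Omega_j$ (via Hai--Hiep) and then squeezes the increasing limit between $u$ and $u$ by two comparison principles. However, as written your proposal has several genuine gaps, and the parts you defer are exactly the content of the theorem.

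First, the non-triviality of the envelope is not established: the claim that $C\rho_j\le\varphi$ in $\Omega$ for a constant depending only on $\sup|\varphi|$ and $\inf|\rho|$ is false in general, since two functions of $\mathcal E_0(\Omega)$ need not dominate constant multiples of each other near $\partial_{\mathcal F}\Omega$ (already in the ball, compare $\rho=|z|^2-1$ with $\varphi=\max(\log|z_1|,-1)$). Producing \emph{any} admissible competitor is precisely the subextension problem, so you cannot take it for granted. Second, the proposed gluing for $\varphi_\infty\ge\varphi$ does not yield admissible competitors: a function that equals $\varepsilon\rho_j$ outside an $\mathcal F$-compact piece of $\Omega$ is an element of the defining family only if $\varepsilon\rho_j\le\varphi$ there, which is the same false domination; and the usual gluing lemma requires the inner function to drop below the outer one near the inner $\mathcal F$-boundary, which $\varphi+\varepsilon\rho$ versus $\varepsilon\rho_j\le\varepsilon\rho$ does not do. Nothing in your sketch actually invokes the finite $p$-energy of $u$, whereas the paper needs it both to feed the measure into the existence theorem and to run the comparison principle (Proposition 4.4). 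Third, the final diagonal recombination fails: with $u_j:=\max_{1\le k\le j}(\varphi_{k,j}+2^{-k}\rho_j)$ you get $\lim_j u_j\ge \varphi_1+\tfrac12\rho$ a.e., and this can be strictly larger than $u$ (take $u$ with $\varphi_1\ge u$ strictly and $\rho$ small); increasing approximation of a decreasing limit cannot be obtained by taking maxima over the index $k$. The cleaner fix within your framework is to subextend $u$ itself rather than the truncations, but then the identification of the limit with $u$ still has to be proved, and for that you would in effect need the comparison machinery (Propositions 4.3--4.5 of the paper) that your outline omits.
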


The paper is organized as follows. In section 2 we recall some notions of (plurifine) pluripotential theory. In Section 3, we give  the definition of bounded $\mathcal F$-hyperconvex domain $\Omega$ and the class $\mathcal E_0(\Omega)$  which is similar as the class introduced in \cite{Ce98} for the case  is a bounded hyperconvex domain. 
In Section 4, we introduce and investigate the class $\mathcal F_p(\Omega)$, $p>0$. 
Section 5 is devoted to prove  the theorem \ref{the1}.

\section{Preliminaries}
Some elements of pluripotential  theory (plurifine potential  theory) that will be used  throughout  the paper can be  found  in \cite{ACCH}-\cite{W12}. 
Let $\Omega$ be  a  Euclidean open set of $\mathbb C^n$. We  denote  by $PSH^-(\Omega)$   the  family  of   negative plurisubharmonic  functions in $\Omega$.
The plurifine topology $\mathcal F$ on   $\Omega$  is the smallest topology that makes all plurisubharmonic functions on $\Omega$ continuous. 
Notions pertaining to the plurifine topology are indicated with the prefix $\mathcal F$ to distinguish them from notions pertaining to the Euclidean topology on $\mathbb C^n$.
For a set $A\subset \mathbb  C^n$ we write $\overline{A}$ for the closure of $A$ in the one point compactification
of $\mathbb C^n$, $\overline{A}^{\mathcal F}$ for the $\mathcal F$-closure of $A$ and $\partial _{\mathcal F}A$ for the $\mathcal F$-boundary of $A$.
The set of all negative  $\mathcal F\text{-}$plurisubharmonic functions defined in $\mathcal F\text{-}$open set $\Omega$ is denoted by $\mathcal F\text{-}  PSH^-(\Omega)$. 

\begin{proposition}\label{Prrrr01010111}
Let $\Omega$ be an $\mathcal F$-open set in $\mathbb C^n$ and let $u\in \mathcal F\text{-}PSH^-(\Omega)$.
Assume that $\chi: \mathbb R^- \to \mathbb R^-$ is increasing  convex function. Then, $\chi\circ u \in \mathcal F\text{-}PSH^-(\Omega)$.
\end{proposition}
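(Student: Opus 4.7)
The plan is to reduce the claim to the classical fact that if $v$ is plurisubharmonic on a Euclidean open set and $\chi:\mathbb R^-\to\mathbb R^-$ is increasing and convex, then $\chi\circ v$ is again plurisubharmonic. The natural tool is the local characterization of $\mathcal F$-plurisubharmonic functions (due to El Kadiri and El Marzguioui--Wiegerinck) by Euclidean plurisubharmonic approximants: for every $z_0\in\Omega$ there exists an $\mathcal F$-neighborhood $U\subset\Omega$ of $z_0$ and a decreasing sequence $\{v_k\}\subset PSH^-(V)$ on some Euclidean open set $V\supset U$ with $v_k\searrow u$ on $U$.

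First I would extend $\chi$ to $\bar\chi:[-\infty,0)\to[-\infty,0)$ by $\bar\chi(-\infty):=\lim_{t\to-\infty}\chi(t)$. Since $\chi$ is convex on $\mathbb R^-$, it is continuous on $(-\infty,0)$, and $\bar\chi$ is increasing and continuous in the obvious sense. This takes care of the $\mathcal F$-upper semicontinuity of $\chi\circ u$: for any $c\in\mathbb R$, setting $c':=\inf\{t\in[-\infty,0):\bar\chi(t)\ge c\}$, one has
\[
\{z\in\Omega:\chi\circ u(z)<c\}=\{z\in\Omega:u(z)<c'\},
\]
and the right-hand set is $\mathcal F$-open by the $\mathcal F$-upper semicontinuity of $u$.

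Next I would apply the local characterization. Fixing $z_0\in\Omega$ together with the associated $U$ and $\{v_k\}$, each $\chi\circ v_k$ belongs to $PSH^-(V)$ by the classical composition result for ordinary plurisubharmonic functions. By monotonicity and continuity of $\bar\chi$, passing to the limit gives $\chi\circ v_k\searrow\chi\circ u$ pointwise on $U$. Since the $\mathcal F$-neighborhoods $U$ cover $\Omega$ as $z_0$ varies, $\chi\circ u$ is locally a decreasing limit of negative plurisubharmonic functions on Euclidean neighborhoods; combined with its $\mathcal F$-upper semicontinuity this yields $\chi\circ u\in\mathcal F\text{-}PSH^-(\Omega)$.

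The main technical point is invoking the appropriate local approximation from the plurifinely pluripotential theory literature and matching conventions so that the approximants $v_k$ are negative, decreasing, and genuinely plurisubharmonic on Euclidean sets containing $U$; after that the proof is essentially bookkeeping relying on continuity and monotonicity of $\chi$ to commute composition with the decreasing limit.
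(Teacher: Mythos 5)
Your overall strategy is the same as the paper's: reduce to the classical composition theorem for plurisubharmonic functions via local approximation of $u$ by decreasing sequences of genuine plurisubharmonic functions, and pass to the limit using monotonicity and continuity of $\chi$. However, there is a genuine gap in the way you invoke the local characterization. The result you need (Theorem 2.17 in the cited paper of El Kadiri and Wiegerinck) does \emph{not} provide, for \emph{every} $z_0\in\Omega$, an $\mathcal F$-neighborhood on which $u$ is a decreasing limit of plurisubharmonic functions defined on Euclidean open sets. It only provides an $\mathcal F$-closed pluripolar exceptional set $E\subset\Omega$ such that this local approximation is available at every point of $\Omega\setminus E$. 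Your argument as written therefore only establishes $\chi\circ u\in\mathcal F\text{-}PSH^-(\Omega\setminus E)$, and you are missing the step that extends the conclusion across $E$. This is not a cosmetic issue: the stronger pointwise version of the approximation lemma you quote is not available in the literature the paper relies on, so the proof cannot be completed by simply ``covering $\Omega$ by the neighborhoods $U$.''

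The paper closes this gap as follows: having shown $\chi\circ u\in\mathcal F\text{-}PSH^-(\Omega\setminus E)$, it observes that $\chi\circ u$ is $\mathcal F$-continuous on all of $\Omega$ (since $u$ is, and $\chi$ is continuous), and then applies a removability result for pluripolar sets (Theorem 3.7 in the cited paper of El Kadiri, Fuglede and Wiegerinck) to conclude that a function which is $\mathcal F$-plurisubharmonic outside an $\mathcal F$-closed pluripolar set and suitably regular on the whole set is $\mathcal F$-plurisubharmonic everywhere. Your careful verification of the $\mathcal F$-upper semicontinuity of $\chi\circ u$ is correct and is in fact part of what is needed for this last step, but you would still need to state and invoke the pluripolar removability theorem to finish the argument.
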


\begin{proof}
By Theorem 2.17 in \cite{KW14}, there exists a $\mathcal{ F}$-closed, pluripolar set $E\subset \Omega$ such that for every $z\in \Omega \backslash E$, there is an $\mathcal F$-open set $O_z \subset \Omega$ and a decreasing sequence of plurisubharmonic functions $\{\varphi_j\}$ defined in Euclidean open  neighborhoods of $O_z$ such that $\varphi_j \searrow u$ on $O_z$.
Since $\chi \circ \varphi$ is plurisubharmonic functions in Euclidean open neighborhoods of $O_z$ and $\chi \circ \varphi   \searrow \chi \circ u $ on $O_z$, by Theorem 3.9 in \cite{KFW11} we have $\chi \circ u \in \mathcal F\text{-}PSH^-(O_z)$. Therefore,  $\chi \circ u \in \mathcal F\text{-}PSH^-(\Omega \backslash E)$. Moreover, since $\chi\circ u$ is $\mathcal F$-continuous on $\Omega$. by Theorem 3.7 in \cite{KFW11}  it implies that 
 $\chi \circ u \in \mathcal F\text{-}PSH^-(\Omega )$.
The proof is complete.
\end{proof}

\begin{proposition}\label{Prrrr1}
Let $\Omega$ be an $\mathcal F$-open set in $\mathbb C^n$ and let $\varphi$ be strictly plurisubharmonic function in $\mathbb C^n$. Assume that $u,v\in \mathcal F\text{-}PSH^-(\Omega)$  such that 
$$\int_{\Omega \cap \{-\infty< u<v\}} (dd^c \varphi)^n =0.$$
Then, $u\geq v$ on $\Omega$.
\end{proposition}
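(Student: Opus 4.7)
The plan is to exploit strict plurisubharmonicity of $\varphi$ to reduce the vanishing-integral hypothesis to a statement about Lebesgue measure, and then to use $\mathcal F$-openness of the set where $u<v$ to conclude it is empty. Because $\varphi$ is strictly plurisubharmonic on $\mathbb C^n$, we have $dd^c\varphi \geq \varepsilon\, dd^c|z|^2$ for some $\varepsilon>0$, so $(dd^c\varphi)^n \geq c\, d\lambda$ locally with $c>0$, where $d\lambda$ denotes Lebesgue measure. The hypothesis therefore forces
$$\lambda\bigl(\Omega \cap \{-\infty<u<v\}\bigr)=0.$$

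Next I would observe that $A:=\Omega\cap\{u<v\}$ is $\mathcal F$-open. Indeed, since $u,v$ are $\mathcal F$-continuous with values in $[-\infty,0]$, one has $\{u<v\}=\bigcup_{q\in\mathbb Q}\{u<q\}\cap\{v>q\}$, a union of intersections of $\mathcal F$-open sets, and $\{u>-\infty\}$ is $\mathcal F$-open for the same reason. Suppose toward contradiction that $A\neq\emptyset$ and pick $z_0\in A$. If $u(z_0)>-\infty$, choose rationals $u(z_0)<\alpha<\beta<v(z_0)$ and form the $\mathcal F$-open neighborhood
$$W:=\Omega\cap\{u(z_0)-1<u<\alpha\}\cap\{v>\beta\}\subset\{-\infty<u<v\}.$$
If instead $u(z_0)=-\infty$ (so $v(z_0)>-\infty$), then by $\mathcal F$-continuity there is an $\mathcal F$-open neighborhood of $z_0$ on which $u<v(z_0)-2$ and $v>v(z_0)-1$; since an $\mathcal F$-plurisubharmonic function cannot be identically $-\infty$ on any $\mathcal F$-open set, this neighborhood contains a point $z_1$ with $u(z_1)>-\infty$, reducing us to the previous case.

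The main obstacle is the final step: deriving a contradiction from $W$ being a non-empty $\mathcal F$-open set with $\lambda(W)=0$. Equivalently, one needs the fact that every non-empty plurifine-open subset of $\mathbb C^n$ has positive Lebesgue measure. I would establish this by applying Theorem 2.17 of \cite{KW14} (already invoked in Proposition 2.1) to realize $W$, after removing a pluripolar set, as a plurifine-open subset of a Euclidean open set on which $u$ and $v$ are decreasing limits of genuine plurisubharmonic functions. The sub-mean-value inequality applied to the psh functions $\psi_i$ defining a plurifine-neighborhood basis at $z_0$, combined with their quasi-continuity, ensures that the intersection of the super-level sets $\{\psi_i>c_i\}$ has positive Lebesgue density at $z_0$, so $\lambda(W)>0$. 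This contradicts the first displayed equation above and forces $A=\emptyset$, which is precisely the asserted inequality $u\geq v$ on $\Omega$.
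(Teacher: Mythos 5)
Your argument is correct, but it takes a genuinely different route from the paper's. You reduce the hypothesis to $\lambda\bigl(\Omega\cap\{-\infty<u<v\}\bigr)=0$ via $(dd^c\varphi)^n\geq c\,d\lambda$ and then rest everything on the fact that a non-empty $\mathcal F$-open set has positive Lebesgue measure. The paper instead avoids any direct appeal to metric properties of the plurifine topology: for a point $z$ with $u(z)>-\lambda$ it chooses $\psi\in PSH^-(\mathbb B(z,r))$ with $\psi(z)>-\tfrac12$ and $\mathbb B(z,r)\cap\{\psi>-1\}\subset\Omega$, uses the gluing results (Proposition 2.3 in \cite{KS14} and Proposition 2.14 in \cite{KFW11}) to turn $\max(-4\lambda,u+4\lambda\psi)$ and $\max(-4\lambda,v+4\lambda\psi)$ into genuine bounded plurisubharmonic functions $f,g$ on the Euclidean ball with $\{f<g\}$ contained (up to a pluripolar set) in $\Omega\cap\{-\infty<u<v\}$, and then applies the classical domination principle on the ball to get $f\geq g$, hence $u(z)\geq v(z)$; the passage to $\{u=-\infty\}$ is by $\mathcal F$-continuity and Theorem 3.7 of \cite{KFW11}. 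The classical principle on the ball is proved by exactly your Lebesgue-measure reduction, but there one only needs that a psh function dominated a.e.\ is dominated everywhere (sub-mean-value property), not the density theorem for plurifine-open sets. What your approach buys is conceptual transparency — the statement really does hinge on plurifine-open sets being Lebesgue-fat — at the cost of proving that fact; note that for this step your detour through Theorem 2.17 of \cite{KW14} and quasi-continuity is unnecessary: a basic $\mathcal F$-neighborhood of $z_0$ has the form $U\cap\bigcap_{i=1}^k\{\psi_i>c_i\}$ with $U$ Euclidean open, $\psi_i$ psh and $\psi_i(z_0)>c_i$, and the sub-mean-value inequality plus upper semicontinuity already gives each $\{\psi_i>c_i\}$ Lebesgue density $1$ at $z_0$ (equivalently, invoke Fuglede's density theorem for the fine topology, which refines the plurifine one). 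Both proofs share the same implicit convention that $u\not\equiv-\infty$ on any $\mathcal F$-component, so no penalty there.
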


\begin{proof}
Let $z\in \Omega \cap \{u>-\infty\}$ and $\lambda >0$ with $u(z)> - \lambda$. Choose $r>0$ and $\psi \in PSH^-(\mathbb B(z,r)) $ such that   $\psi( z)>-\frac{1}{2}$ and  $\mathbb B(z,r) \cap \{\psi >-1\} \subset \Omega$. Put 
$$
f:= 
\begin{cases}
\max( - 4\lambda , u+ 4\lambda \psi ) & \text{ in } \Omega
\\ - 4 \lambda & \text{ in }\mathbb B(z,r) \backslash \Omega
\end{cases}
$$ 
and 
$$
g:= 
\begin{cases}
\max( - 4\lambda , v+ 4\lambda \psi ) & \text{ in } \Omega
\\ - 4 \lambda & \text{ in }\mathbb B(z,r)  \backslash \Omega.
\end{cases}
$$ 
By Proposition 2.3 in \cite{KS14} and Proposition 2.14 in \cite{KFW11} it follows that  $f,g  \in PSH^-(\mathbb B(z,r)) $. From  the hypotheses  we have 
\begin{align*}
0\leq \int_{\mathbb B(z,r) \cap \{-\infty< f<g\}}  (dd^c \varphi)^n
\leq \int_{\Omega\cap \{-\infty< u<v\}}  (dd^c \varphi)^n
\leq 0.
\end{align*}
This implies that  $f \geq g$ in $\mathbb B(z,r)$. Hence, $u(z) \geq v(z)$, and therefore, $u \geq v$ in $\Omega \cap \{u>-\infty\}$.
Since $u,v$ are $\mathcal F$-continuous, by Theorem 3.7 in \cite{KFW11} we obtain that $u \geq v$ in $\Omega $.
The proof is complete.
\end{proof}

\begin{definition}{\rm 
Let $\Omega$ be an $\mathcal F$-open set in $\mathbb C^n$ and let  $QB(\Omega)$ be the trace of $QB(\mathbb C^n)$ on $\Omega$, where $QB(\mathbb C^n)$ is denoted  the measurable space on $\mathbb C^n$ generated by the Borel sets and the pluripolar subsets of  $\mathbb C^n$.
Assume that 
$u_1, \ldots , u_n \in \mathcal F\text{-} PSH(\Omega)$ be finite. 
Using the quasi-Lindel\"of property of plurifine topology and Theorem 2.17 in \cite{KW14}, there exist a pluripolar set $E\subset \Omega$, a sequence of $\mathcal F$-open subsets  $\{O_k\}$   and the plurisubharmonic functions  $f_{j,k}, g_{j,k}$ defined in Euclidean neighborhoods of $\overline O_k$  such that    $\Omega=E \cup \bigcup_{k=1}^\infty O_k $ and   $u_j=f_{j,k} -g_{j,k}$ on $O_k$. We define $O_0:= \emptyset$ and   
\begin{equation}\label{eqqqqqqqq}
\int_A dd^c u_1 \wedge \ldots \wedge dd^c u_n  
:= \sum_{j=1}^\infty \int_{A\cap ( O_j \backslash \bigcup_{k=0}^{j-1} O_k ) } dd^c ( f_{1,k} -g_{1,k}) \wedge \ldots  \wedge dd^c ( f_{n,k} -g_{n,k}), \  
A\in QB(\Omega). 
\end{equation} 
By  Theorem 3.6 in \cite{KW14}, the measure  defined by \eqref{eqqqqqqqq} is  independent on $E$,   $\{O_k\}$, $\{f_{j,k}\}$ and $\{g_{j,k}\}$.
This measure  is called the complex  Monge-Amp\`ere  measure.
}\end{definition}

\begin{proposition}
Let $\Omega$ be an $\mathcal F$-open set in $\mathbb C^n$  and let $u_1, \ldots , u_n \in \mathcal F\text{-} PSH(\Omega)$ be finite. Then,  $dd^c u_1 \wedge \ldots \wedge dd^c u_n  $ is non-negative measure  on $QB(\Omega)$.
\end{proposition}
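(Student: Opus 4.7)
The plan is to localize via the defining covering and reduce to the classical Bedford--Taylor non-negativity for plurisubharmonic functions. By \eqref{eqqqqqqqq} the measure is a countable sum of signed measures supported on the pairwise disjoint $\mathcal F$-Borel pieces $P_j := O_j \setminus \bigcup_{k=0}^{j-1} O_k$, so it suffices to prove that on each $\mathcal F$-open set $O_k$ the signed measure
\[
\nu_k := dd^c(f_{1,k}-g_{1,k}) \wedge \cdots \wedge dd^c(f_{n,k}-g_{n,k}),
\]
computed by multilinear expansion as a signed Borel measure in a Euclidean neighborhood of $\overline{O_k}$, restricts to a non-negative measure on every $QB$-subset of $O_k$.

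Fix $k$. Using Theorem~2.17 of \cite{KW14} together with the quasi-Lindel\"of property of the plurifine topology, I would cover $O_k$, up to a pluripolar set, by countably many $\mathcal F$-open subsets $U_\alpha \subset O_k$ on each of which there exist decreasing sequences of plurisubharmonic functions $u_j^{(m)} \searrow u_j$, with each $u_j^{(m)}$ defined in a Euclidean neighborhood of $U_\alpha$. Truncating by $u_j^{(m),M} := \max(u_j^{(m)}, -M)$ yields locally bounded psh functions, so the Bedford--Taylor product $\bigwedge_j dd^c u_j^{(m),M}$ is a non-negative Radon measure. I would then invoke the independence assertion in Theorem~3.6 of \cite{KW14} applied to the two representations of $u_j$ on $U_\alpha$ (namely, $u_j = f_{j,k}-g_{j,k}$ and $u_j = \lim_{M}\lim_{m} u_j^{(m),M}$) to identify, on $U_\alpha$, the restriction of $\nu_k$ with the weak limit $\lim_{M\to\infty}\lim_{m\to\infty} \bigwedge_j dd^c u_j^{(m),M}$. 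Since non-negativity is preserved under weak limits, the restriction of $\nu_k$ to $U_\alpha$ is non-negative. Summing over $\alpha$, using that pluripolar sets carry no mass for the $\mathcal F$-Monge--Amp\`ere operator, and then summing over $k$ yields the conclusion.

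The main obstacle is justifying the iterated limit $M\to\infty$ after $m\to\infty$ in order to pass from the bounded case to the general finite (but possibly unbounded) $u_j$, and verifying that the limiting measure agrees with the one defined in \eqref{eqqqqqqqq}. This relies on the monotone convergence properties of the $\mathcal F$-Monge--Amp\`ere operator for decreasing sequences of $\mathcal F$-psh functions established in \cite{KW14}, and is precisely the step where one genuinely exploits the hypothesis that each $u_j$ is $\mathcal F$-plurisubharmonic (not merely a formal difference of psh functions), so that a local approximation from above by psh functions is available.
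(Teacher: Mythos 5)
Your argument is correct and is essentially the paper's proof unpacked: the paper disposes of this proposition in one line by citing \cite{BT87}, Theorem 2.17 and Lemma 4.1 of \cite{KW14}, and your localization, approximation by decreasing sequences of plurisubharmonic functions, and passage to the limit is exactly the content of those citations. The iterated limit you flag as the main obstacle is harmless, since off a pluripolar set each $u_j$ is $\mathcal F$-locally a difference of bounded plurisubharmonic functions, hence $\mathcal F$-locally bounded, so your truncation is eventually inactive on each piece $U_\alpha$ and the identification with \eqref{eqqqqqqqq} follows from the independence statement you cite.
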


\begin{proof}
The statement follows from \cite{BT87}, Theorem 2.17 in \cite{KW14} and Lemma 4.1 in \cite{KW14}. 
\end{proof}

\begin{proposition} \label{pr051}
Let $\Omega$ be an $\mathcal F\text{-}$open set in $\mathbb C^n$ and let $\mu$ be non-negative measure  on $QB(\Omega)$. Assume that 
$u,v\in \mathcal F\text{-}  PSH(\Omega)$ are finite such that $(dd^c u)^n \geq \mu$ and $(dd^c v)^n \geq \mu$ in $\Omega$. Then $(dd^c \max(u,v))^n \geq \mu $ in $\Omega$.
\end{proposition}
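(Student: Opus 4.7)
The strategy is to split $\Omega$ into three pieces $\{u > v\}$, $\{v > u\}$, and $E := \{u = v\}$, handling the first two by the locality of the plurifine Monge-Amp\`ere measure and reducing the diagonal $E$ to classical Bedford-Taylor via a decreasing $\epsilon$-approximation. Since $u, v$ are finite and $\mathcal F$-continuous, the sets $\{u > v\}$ and $\{v > u\}$ are $\mathcal F$-open. On $\{u > v\}$ one has $\max(u,v) \equiv u$, so the locality of the plurifine Monge-Amp\`ere measure (Lemma 4.1 of \cite{KW14}) gives $(dd^c \max(u,v))^n = (dd^c u)^n \ge \mu$ there; symmetrically on $\{v > u\}$.

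To treat the $\mathcal F$-closed diagonal $E$, set $w_\epsilon := \max(u + \epsilon, v)$ for $\epsilon > 0$. Each $w_\epsilon$ is a finite $\mathcal F$-PSH function and $w_\epsilon \searrow \max(u,v)$ as $\epsilon \searrow 0$. The same locality argument applied to the $\mathcal F$-open sets $\{u + \epsilon > v\} \supset E$ and $\{v > u + \epsilon\}$ yields $(dd^c w_\epsilon)^n \ge \mu$ everywhere except possibly on the $\mathcal F$-closed level set $\{u + \epsilon = v\}$. Since these level sets are pairwise disjoint as $\epsilon$ varies and $\mu$ is $\sigma$-finite (being dominated by $(dd^c u)^n$), one can select $\epsilon_k \searrow 0$ with $\mu(\{u + \epsilon_k = v\}) = 0$ for every $k$; for such a sequence the inequality $(dd^c w_{\epsilon_k})^n \ge \mu$ holds globally on $\Omega$.

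To pass to the limit, work on each $\mathcal F$-open piece $O_j$ from the defining decomposition \eqref{eqqqqqqqq}: write $u = f_1 - g_1$ and $v = f_2 - g_2$ with $f_i, g_i$ plurisubharmonic in a Euclidean neighborhood of $\overline{O_j}$, so that $w_\epsilon + (g_1 + g_2) = \max(f_1 + g_2 + \epsilon, f_2 + g_1)$ is a family of Euclidean plurisubharmonic functions decreasing in $\epsilon$ to $\max(f_1+g_2, f_2+g_1)$. Classical Bedford-Taylor \cite{BT87} then gives weak convergence of the associated Euclidean Monge-Amp\`ere measures on $O_j$; assembling these via \eqref{eqqqqqqqq} produces the weak convergence $(dd^c w_{\epsilon_k})^n \to (dd^c \max(u,v))^n$ on $\Omega$. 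Testing against any non-negative continuous compactly supported function transfers the inequality $\ge \mu$ to the limit, giving $(dd^c \max(u,v))^n \ge \mu$. The principal obstacle is precisely this last step: organizing the local Euclidean convergence on each piece $O_j$ into a coherent global weak-convergence statement in the $QB$-measure framework of \eqref{eqqqqqqqq}, and verifying that the domination by $\mu$ survives the passage to the limit.
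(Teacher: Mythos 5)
Your opening idea---locality of the plurifine Monge--Amp\`ere operator on the $\mathcal F$-open sets $\{u>v\}$ and $\{v>u\}$, plus a perturbation of one of the functions to shrink the coincidence set---is the right germ and is close to what the paper does. But your choice of a \emph{decreasing} perturbation $w_\varepsilon=\max(u+\varepsilon,v)\searrow\max(u,v)$ leaves the decisive step unproved, and you say so yourself. Two things are missing. First, you need a convergence theorem for decreasing sequences of finite (not necessarily locally bounded) $\mathcal F$-plurisubharmonic functions; your reduction to Bedford--Taylor \cite{BT87} on each piece $O_j$ of the quasi-Lindel\"of decomposition is only sketched, and reassembling the local vague limits into a statement about the globally defined measure on $QB(\Omega)$ is precisely what you leave open. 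Second, and more seriously, ``testing against non-negative continuous compactly supported functions'' does not transfer the inequality $\ge\mu$ to the limit: vague convergence against Euclidean-continuous test functions only controls the limit measure on Euclidean open sets (or, via inner regularity, on compact sets), whereas $\mu$ is an arbitrary non-negative measure on the quasi-Borel field $QB(\Omega)$ and the conclusion must hold on $\mathcal F$-open and pluripolar sets, which in general cannot be approximated from outside by Euclidean open sets; domination of measures on open sets does not by itself yield domination on all (quasi-)Borel sets without regularity properties of $\mu$ that are neither assumed nor established. So the proof is not complete as written: the entire content of the proposition beyond the trivial locality observation sits in the limit passage you flag as ``the principal obstacle.''

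The paper avoids both difficulties by perturbing in the \emph{increasing} direction. With $v_j:=\max(u,v-\frac1j)$ one has $v_j\nearrow\max(u,v)$, and Theorem 4.8 in \cite{KW14} gives
\[
(dd^cv_j)^n\ \ge\ 1_{\{u\ge v\}}(dd^cu)^n+1_{\{u<v-\frac1j\}}(dd^cv)^n\ \ge\ 1_{A_j}\,\mu,
\qquad A_j:=\{u\ge v\}\cup\{u<v-\tfrac1j\}\nearrow\Omega,
\]
after which the increasing-convergence theorem (Theorem 4.5 in \cite{KS14}) is exactly the tool that lets this lower bound pass to the limit measure $(dd^c\max(u,v))^n$ on $QB(\Omega)$. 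If you want to salvage your route you would have to prove a plurifine portmanteau-type inequality for your decreasing family together with suitable regularity of $\mu$; the much shorter fix is to replace $\max(u+\varepsilon,v)$ by $\max(u,v-\frac1j)$ and invoke the increasing-convergence theorem, which recovers the paper's three-line proof.
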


\begin{proof}
Put $v_j:=\max(u,v-\frac{1}{j})$, where $j\in \mathbb N^*$. By Theorem 4.8 in \cite{KW14} we have 
$$(dd^c v_j)^n  \geq 1_{\{ u\geq v\} } (dd^c u)^n + 1_{\{ u<v-\frac{1}{j}\}} (dd^c v)^n \geq 1_{\{ u\geq v\} \cup \{ u<v-\frac{1}{j}\}} \mu. 
$$
Since $v_j \nearrow \max(u,v)$ on $\Omega$, by Theorem 4.5 in \cite{KS14} we obtain 
$(dd^c \max(u,v))^n \geq \mu $ in $\Omega$.
The proof is complete. 
\end{proof}

\begin{proposition} \label{pr052}
Let $\Omega$ be an  $\mathcal F$-open set in $\mathbb C^n$  and let $u\in \mathcal F\text{-}PSH^-(\Omega)$ be finite. Assume that  $\{u_j\}$  is a monotone sequence of negative, finite, $\mathcal F$-plurisubharmonic functions such that $u_j\to u$ a.e. on $\Omega$. Then 
$$\int_\Omega f (dd^c u)^n 
\leq \liminf_{j\to+\infty} \int_\Omega f (dd^c u_j)^n,$$
for every non-negative, bounded, $\mathcal F$-continuous function $f$ on $\Omega$.
\end{proposition}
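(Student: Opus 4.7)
\medskip

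\noindent\textbf{Proof plan.} My plan is to reduce the claim to a local Euclidean statement via the quasi-Lindel\"of property of the plurifine topology, and then invoke the classical Bedford--Taylor convergence theorem (in the decreasing case) or Theorem 4.5 in \cite{KS14} (in the increasing case) on each local piece. First, I would apply Theorem 2.17 in \cite{KW14} simultaneously to $u$ and to every $u_j$ (countably many functions, so the quasi-Lindel\"of property still applies) to obtain a pluripolar set $E\subset \Omega$ and a sequence of $\mathcal F$-open sets $\{O_k\}$ with $\Omega=E\cup\bigcup_{k\ge1}O_k$, such that on each $O_k$ both $u$ and every $u_j$ are $\mathcal F$-limits of plurisubharmonic functions defined on Euclidean neighborhoods of $\overline O_k$, and in particular admit representations of the form appearing in \eqref{eqqqqqqqq}. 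Setting $O_0:=\emptyset$ and $A_k:=O_k\setminus\bigcup_{i<k}O_i$, the defining formula \eqref{eqqqqqqqq} together with the fact that pluripolar sets carry no Monge--Amp\`ere mass gives
$$\int_\Omega f\,(dd^cu)^n=\sum_{k\ge1}\int_{A_k}f\,(dd^cu)^n,$$
and the analogous identity holds for each $(dd^cu_j)^n$.

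Next, I would establish on each $O_k$ the inequality
$$\int_{A_k}f\,(dd^cu)^n\;\le\;\liminf_{j\to+\infty}\int_{A_k}f\,(dd^cu_j)^n.$$
If $u_j\searrow u$, then in a Euclidean neighborhood of $O_k$ the restrictions $u_j|_{O_k}$ and $u|_{O_k}$ are (differences of) locally bounded plurisubharmonic functions and the classical Bedford--Taylor continuity theorem yields weak convergence of $(dd^cu_j)^n$ to $(dd^cu)^n$ on $O_k$. If $u_j\nearrow u$, the corresponding statement is provided by Theorem 4.5 in \cite{KS14}, the same reference used in the proof of Proposition \ref{pr051}. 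Applying Fatou's lemma once more to exchange the outer $\liminf$ with the series in $k$ (permissible because $f\ge0$ and each summand is non-negative) then gives
$$\int_\Omega f\,(dd^cu)^n\le\sum_{k\ge1}\liminf_{j}\int_{A_k}f\,(dd^cu_j)^n\le\liminf_{j}\int_\Omega f\,(dd^cu_j)^n.$$

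The main obstacle will be the local step, because the test function $f$ is assumed only to be bounded and $\mathcal F$-continuous, whereas the Bedford--Taylor weak convergence on $O_k$ is stated for Euclidean continuous test functions. To bridge this gap I intend to exploit the $\mathcal F$-continuity of $f$ in order to refine the covering $\{O_k\}$: the sets $\{f>t\}$ are $\mathcal F$-open for every $t\ge 0$, so a layer-cake decomposition
$$\int_{A_k} f\,d\nu=\int_0^{\|f\|_\infty}\nu(A_k\cap\{f>t\})\,dt$$
reduces the problem to proving lower semicontinuity of $\nu_j:=(dd^cu_j)^n$ applied to $\mathcal F$-open subsets of $O_k$. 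This last lower semicontinuity on $\mathcal F$-open sets is exactly the content, in the monotone setting, of Theorem 4.5 in \cite{KS14} (increasing case) and of the quasi-continuity-based comparison argument underlying Theorem 4.8 in \cite{KW14} (decreasing case); combining these with dominated convergence in the $t$-integral will give the desired estimate on each piece, and the summation step above then completes the proof.
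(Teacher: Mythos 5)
Your overall strategy---quasi-Lindel\"of localization, local vague convergence of the Monge--Amp\`ere measures, and a Fatou-type exchange of $\liminf$ with the sum over the pieces---is the same skeleton as the paper's proof. But the way you cut $\Omega$ into pieces creates a genuine gap. You disjointify the cover by setting $A_k:=O_k\setminus\bigcup_{i<k}O_i$ and then claim $\int_{A_k}f\,(dd^cu)^n\le\liminf_j\int_{A_k}f\,(dd^cu_j)^n$ from the vague convergence of $(dd^cu_j)^n$ to $(dd^cu)^n$ on $O_k$. The sets $A_k$ are intersections of an $\mathcal F$-open with an $\mathcal F$-closed set and are in general not $\mathcal F$-open, and vague (or weak) convergence of measures only gives lower semicontinuity of the mass on \emph{open} sets; on a set like $A_k$ the limit measure can charge the part of $\partial_{\mathcal F}\bigl(\bigcup_{i<k}O_i\bigr)$ lying inside $O_k$ without the approximating measures seeing it, so the claimed inequality can fail. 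Your layer-cake reduction does not repair this: $A_k\cap\{f>t\}$ is still not $\mathcal F$-open, so ``lower semicontinuity on $\mathcal F$-open sets'' does not apply to it. A secondary weak point is the decreasing case: the classical Bedford--Taylor theorem applies to the Euclidean representatives $f_{j,k}-g_{j,k}$, and you would still have to show that these representatives (which a priori depend on $j$) converge appropriately and that their Monge--Amp\`ere measures agree with the $\mathcal F$-measures of $u_j$ on $O_k$; the paper avoids this entirely by quoting Theorem 4.5 of \cite{KS14}, which gives $\mathcal F$-local vague convergence for monotone sequences directly in the plurifine setting. You also do not address the passage from ``$u_j\to u$ a.e.'' to convergence outside a pluripolar set (Theorem 3.9 in \cite{KFW11}), which is needed before any convergence theorem can be invoked.

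The fix is exactly what the paper does: instead of indicator functions of the disjoint pieces, take a partition of unity $\{\chi_k\}$ of non-negative $\mathcal F$-continuous functions with $0\le\chi_k\le1$, $\chi_k$ compactly supported in $O_k$, and $\sum_k\chi_k=1$ quasi-everywhere on $\Omega\setminus E$. Then each $f\chi_k$ is an admissible test function for the $\mathcal F$-local vague convergence on $O_k$, so $\int_{O_k}f\chi_k\,(dd^cu_j)^n\to\int_{O_k}f\chi_k\,(dd^cu)^n$, and since the partial sums satisfy $\sum_{k\le l}\chi_k\le1$ you get
\[
\int_\Omega f\,(dd^cu)^n=\sup_{l}\sum_{k\le l}\int_{O_k}f\chi_k\,(dd^cu)^n=\sup_l\lim_j\int_\Omega f\Bigl(\sum_{k\le l}\chi_k\Bigr)(dd^cu_j)^n\le\liminf_j\int_\Omega f\,(dd^cu_j)^n,
\]
with no need to evaluate any measure on a non-open set.
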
 

\begin{proof}
From Theorem 3.9 in \cite{KFW11}, there exists a $\mathcal F$-closed, pluripolar set $E\subset \Omega$ such that $u_j \to u$ on $\Omega \backslash E$.
By Theorem 4.5 in \cite{KS14} we have  the sequence of measures $(dd^c u_j)^n$ converges $\mathcal F$-locally vaguely to $(dd^cu)^n$ on $\Omega \backslash E$.
Using the quasi-Lindel\"of property of plurifine topology,  there exist a pluripolar set $F\subset \Omega \backslash E$, a sequence of $\mathcal F$-open subsets  $\{O_k\}$   and non-negative $\mathcal F$-continuous  functions  $\chi_{k}$ in $\mathbb C^n$ with compact support on $O_k$ such that   $\Omega \backslash E=F \cup \bigcup_{k=1}^\infty O_k $, $0\leq \chi_k \leq 1$, $\sum_{k=1}^\infty \chi_k=1$ on $\Omega \backslash (E \cup F)$ and 
$$\int_{O_k} f \chi_k (dd^c u)^n= \lim_{j\to+\infty} \int_{O_k}  f\chi_k   (dd^c u_j)^n, \text{ for all } k\geq 1.$$
It follows that 
\begin{align*}
\int_\Omega f  (dd^c u)^n 
& =\int_{\bigcup_{k=1}^\infty O_k} f  (dd^c u)^n 
=\sup_{l\geq 1} \sum_{k=1}^l  \int_{O_k} f \chi_k (dd^c u)^n
\\&  =\sup_{l\geq 1} \lim_{j\to+\infty}  \int_\Omega f \left( \sum_{k=1}^l \chi_k   \right) (dd^c u_j)^n
\leq \liminf_{j\to+\infty} \int_\Omega f (dd^c u_j)^n.
\end{align*}
The proof is complete.
\end{proof}

\section{The class $\mathcal E_0(\Omega)$}

\begin{definition} \label{def1}
{\rm Let $\Omega$ be bounded $\mathcal F$-domain $\Omega$ in $\mathbb C^n$. Then,  $\Omega$   is  called  $\mathcal F$-hyperconvex if there exist a negative bounded plurisubharmonic function $\gamma_\Omega$ defined in a bounded hyperconvex domain $\Omega'$ such that $\Omega =\Omega' \cap \{\gamma_\Omega >-1\}$ and $-\gamma_\Omega $ is $\mathcal F$-plurisubharmonic  in $\Omega$.

We say that  a bounded negative  $\mathcal F$-plurisubharmonic function $u$ defined on bounded $\mathcal F$-hyperconvex domain $\Omega$ belongs to $\mathcal E_0(\Omega)$ if  $\int_\Omega (dd^c u)^n<+\infty$ and satisfy for every $\varepsilon>0$, there exists $\delta>0$ such that  $\overline{\Omega\cap  \{u<- \varepsilon\} } \subset  \Omega' \cap\{\gamma_\Omega >-1+\delta\}$. 
} \end{definition}

\begin{remark}{\rm 
If $\Omega$ is bounded hyperconvex domain then it is $\mathcal F$-hyperconvex. Moreover, there exists a bounded $\mathcal F$-hyperconvex domain that has no Euclidean interior point.

}\end{remark}

\begin{proposition} \label{pro1a}
Let $\Omega$ be a bounded $\mathcal F$-hyperconvex domain in $\mathbb C^n$. 
Then    $\mathcal E_0(\Omega) \neq \emptyset$. 
\end{proposition}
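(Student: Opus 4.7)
Since $\Omega'$ is a bounded hyperconvex domain, Cegrell's construction in \cite{Ce98} provides a continuous negative plurisubharmonic exhaustion $\rho_0$ of $\Omega'$ lying in the Cegrell class $\mathcal E_0(\Omega')$; in particular $\rho_0<0$ on $\Omega'$, $\rho_0\to 0$ at $\partial\Omega'$, and $\int_{\Omega'}(dd^c\rho_0)^n<+\infty$. My candidate is
$$u := \max(\rho_0,\,-\gamma_\Omega-1)\quad\text{on }\Omega.$$

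Easy verifications: $u$ is bounded (both arguments of the max are bounded) and strictly negative on $\Omega$ (since $\rho_0<0$ and, because $\gamma_\Omega>-1$ on $\Omega$, $-\gamma_\Omega-1<0$). It is $\mathcal F$-plurisubharmonic because $\rho_0$ is Euclidean psh on $\Omega'\supset\Omega$ (hence $\mathcal F$-psh) and $-\gamma_\Omega-1$ is $\mathcal F$-psh by the defining hypothesis of $\mathcal F$-hyperconvexity, and the max of two $\mathcal F$-psh functions is $\mathcal F$-psh. For the boundary condition in Definition \ref{def1}, given $\varepsilon>0$ I would take $\delta:=\varepsilon/2$; then
$$\Omega\cap\{u<-\varepsilon\}\subset\{\rho_0<-\varepsilon\}\cap\{\gamma_\Omega>-1+\varepsilon\}.$$
The Euclidean closure of $\{\rho_0<-\varepsilon\}$ lies in $\Omega'$ because $\rho_0$ is a continuous exhaustion, and then upper semicontinuity of $\gamma_\Omega$ on $\Omega'$ forces the closure of the intersection to lie in $\Omega'\cap\{\gamma_\Omega\ge-1+\varepsilon\}\subset\Omega'\cap\{\gamma_\Omega>-1+\delta\}$, giving the required inclusion.

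The main obstacle is the finiteness of $\int_\Omega(dd^c u)^n$. Using the plurifine locality of the complex Monge-Amp\`ere operator from \cite{KW14} (the principle behind Propositions \ref{pr051}--\ref{pr052}), the measure $(dd^cu)^n$ coincides with $(dd^c\rho_0)^n$ on the $\mathcal F$-open set $\{\rho_0>-\gamma_\Omega-1\}$ and with $(dd^c(-\gamma_\Omega))^n$ on $\{\rho_0<-\gamma_\Omega-1\}$. The first contribution is bounded by $\int_{\Omega'}(dd^c\rho_0)^n<+\infty$. For the second, the key observation is that on $\{\rho_0<-\gamma_\Omega-1\}$ one has $\rho_0<-\gamma_\Omega-1<0$, so $\rho_0$ is bounded away from $0$ and the set sits at a positive $\rho_0$-distance from $\partial\Omega'$, while $-\gamma_\Omega$ remains bounded. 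Integration by parts against $\rho_0$ in $QB(\Omega)$, in the spirit of Cegrell's foundational Chern-Levine-Nirenberg estimate adapted to the plurifine setting, should give
$$\int_{\{\rho_0<-\gamma_\Omega-1\}}(dd^c(-\gamma_\Omega))^n \le C\,\|\gamma_\Omega\|_\infty^{\,n}\int_{\Omega'}(dd^c\rho_0)^n<+\infty.$$
The coincidence set $\{\rho_0=-\gamma_\Omega-1\}$ can be absorbed by the perturbation $-\gamma_\Omega-1+\tfrac1j$ together with Proposition \ref{pr052}. Rigorously realizing this plurifine Chern-Levine-Nirenberg estimate is where I expect the genuine technical work to lie.
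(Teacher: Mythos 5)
Your candidate $u=\max(\rho_0,-\gamma_\Omega-1)$ is a genuinely different route from the paper's (the paper does not take a bare maximum; see below), and the easy parts of your verification -- negativity, boundedness, $\mathcal F$-plurisubharmonicity, and the condition $\overline{\Omega\cap\{u<-\varepsilon\}}\subset\Omega'\cap\{\gamma_\Omega>-1+\delta\}$ -- are correct and essentially coincide with the computations the paper performs for its own candidate. The problem is exactly where you locate it: the finiteness of $\int_\Omega(dd^cu)^n$ is not established, and two specific assertions in your sketch do not hold up. First, from $\rho_0<-\gamma_\Omega-1<0$ you conclude that $\rho_0$ is bounded away from $0$ on $\{\rho_0<-\gamma_\Omega-1\}$; this is false, because $-\gamma_\Omega-1$ is negative on $\Omega$ but not uniformly so -- it tends to $0$ along the part of $\partial_{\mathcal F}\Omega$ where $\gamma_\Omega\to-1$ -- so that set need not sit compactly in $\Omega'$ and the Chern--Levine--Nirenberg estimate you invoke has no compact set to act on. (That particular estimate is in fact unnecessary: since both $\gamma_\Omega$ and $-\gamma_\Omega$ are finite $\mathcal F$-plurisubharmonic on $\Omega$, every mixed Monge--Amp\`ere measure containing a factor $dd^c\gamma_\Omega$ is simultaneously $\ge 0$ and $\le 0$ on $QB(\Omega)$ by the positivity proposition of Section~2, hence $(dd^c(-\gamma_\Omega-1))^n=0$ on $\Omega$.) Second, and more seriously, the $\mathcal F$-closed coincidence set $\{\rho_0=-\gamma_\Omega-1\}$ can carry mass, plurifine locality says nothing there, and your proposed fix does not close the gap: replacing $-\gamma_\Omega-1$ by $-\gamma_\Omega-1-\tfrac1j$ only translates the coincidence set, and Proposition~\ref{pr052} gives $\int_\Omega(dd^cu)^n\le\liminf_j\int_\Omega(dd^cu_j)^n$ only after you have a uniform bound on the perturbed masses -- which is precisely what is missing, since each $u_j$ has its own coincidence set with the same defect.

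The paper sidesteps this entirely by not taking the maximum directly. It fixes $\psi\in\mathcal E_0(\Omega')\cap\mathcal C(\Omega')$, an $\mathcal F$-open core $G=\{\psi<-2\varepsilon_0\}\cap\{\gamma_\Omega>-1+2\varepsilon_0\}$ that is simultaneously relatively compact in $\Omega'$ and uniformly inside $\{\gamma_\Omega>-1+2\varepsilon_0\}$, and defines $\rho$ as the envelope $\sup\{\varphi\in\mathcal F\text{-}PSH^-(\Omega):\varphi\le\max(-1-\gamma_\Omega,\psi)\text{ on }G\}$. The point of the envelope is $\mathcal F$-maximality of $\rho$ off $G$, so $(dd^c\rho)^n$ is concentrated on a set on which the gluing $\max(-\tfrac1{\varepsilon_0},\rho+\tfrac1{\varepsilon_0}\gamma_\Omega)$, extended by $-\tfrac1{\varepsilon_0}$, is an honest bounded plurisubharmonic function on $\Omega'$; the total mass then reduces to a classical Chern--Levine--Nirenberg bound over $\Omega'\cap\{\psi<-\varepsilon_0\}\Subset\Omega'$. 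If you want to keep your explicit candidate, you would need an analogous device (for instance an a priori bound on the mass of a maximum, or a gluing that works across $\{\gamma_\Omega=-1\}$, which fails for your $u$ because $u\ge-(1+\gamma_\Omega)$ decays exactly at the borderline rate); as written, the finiteness of the Monge--Amp\`ere mass is a genuine gap.
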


\begin{proof} 
Let $\Omega'$ be a bounded hyperconvex domain in $\mathbb C^n$ and let  $\gamma_\Omega \in PSH^-(\Omega') \cap L^\infty( \Omega')$ such that   $\Omega= \Omega' \cap \{\gamma_\Omega >-1\} $ and $-\gamma_\Omega  \in \mathcal F\text{-}PSH(\Omega)$. 
Let $\psi \in \mathcal E_0(\Omega')\cap \mathcal C(\Omega')$ such that $-1\leq  \psi<0$ in $\Omega'$.  
Choose $\varepsilon_0>0$ such that   $$G:=\{\psi<-2\varepsilon_0\} \cap \{\gamma_\Omega  >-1+2\varepsilon_0\} \neq \emptyset.$$ 
We define 
$$\rho:= \sup \{ \varphi \in \mathcal F\text{-}PSH^-(\Omega): \varphi \leq \max(-1 -\gamma_\Omega , \psi) \text{ on } G \}.$$
Since $\max(-1 -\gamma_\Omega , \psi) \in \mathcal F\text{-}PSH^-(\Omega)$ and  $G$ is $\mathcal F$-open set, we have $\rho \in \mathcal F\text{-}PSH^-(\Omega)$. 
Let $\varepsilon>0$. Choose $\delta \in (0,\varepsilon)$.
Because
$$-1\leq \max(-1 - \gamma_\Omega , \psi) \leq \rho <0 \text{ in } \Omega$$ 
and $\gamma_\Omega $ is upper semi-continuous on $\Omega'$, it follows that   
\begin{align*}
\overline{\{ \rho <-\varepsilon\}} 
&\subset \overline{\{\psi <-\varepsilon\} \cap \{\gamma_\Omega >-1+\varepsilon\}} 
\\ & \subset \{\psi \leq - \varepsilon\} \cap \{\gamma_\Omega  \geq -1+ \varepsilon\}
 \subset  \Omega ' \cap \{\gamma_\Omega  > -1+\delta\}. 
\end{align*} 
It remains to prove that $\int_\Omega (dd^c \rho)^n<+\infty$.
Put
$$u := 
\begin{cases}
\max(- \frac{1}{\varepsilon_0} , \rho  +  \frac{1}{\varepsilon_0} \gamma_\Omega ) & \text{ in } \Omega;
\\ - \frac{1}{\varepsilon_0}  & \text{ in }   \Omega' \backslash \Omega.
\end{cases}
$$
From Proposition 2.3 in \cite{KS14} and Proposition 2.14 in \cite{KFW11} we get $u \in PSH( \Omega')$. By Proposition 3.2 in \cite{KS14} we have $\rho$ is $\mathcal F$-maximal in $\{\psi>-2 \varepsilon _0\} \cup \{-1<\gamma_\Omega  <-1+ 2 \varepsilon _0 \}$. Moreover, since $\rho= u-\frac{1}{\varepsilon_0}\gamma_\Omega $ in $\{\gamma_\Omega >-1+\varepsilon_0\}$ and $\{\psi<-\varepsilon_0\} \Subset  \Omega'$, by Theorem 4.8 in \cite{KS14} it follows that 
\begin{align*}
\int_\Omega (dd^c \rho)^n 
&=\int_{\{\psi<-  \varepsilon_0\} \cup \{\gamma_\Omega  >-1+   \varepsilon_0\} } (dd^c \rho)^n 
\\& =\int_{\Omega' \cap (\{\psi<-  \varepsilon_0\} \cup \{\gamma_\Omega  >-1+   \varepsilon_0 \}) } (dd^c ( u-\frac{1}{\varepsilon_0}\gamma_\Omega ))^n 
\\ &  \leq \int_{\Omega' \cap \{\psi<-  \varepsilon_0 \}} (dd^c ( u+\frac{1}{\varepsilon_0}\gamma_\Omega ))^n  <+\infty
\end{align*}
(because $\Omega'\cap \{\psi<-\varepsilon_0\} \Subset \Omega'$). Therefore, $\rho \in \mathcal E_0(\Omega)$, and hence, $\mathcal E_0(\Omega) \neq \emptyset$. 
The proof is complete.
\end{proof}

\begin{proposition} \label{pro1b}
Let $\Omega$ be a bounded $\mathcal F$-hyperconvex domain in $\mathbb C^n$. Assume that   $u \in \mathcal E_0(\Omega)$ and $v\in \mathcal F \text{-}PSH(\Omega)$ such that $u\leq  v<0$ in $\Omega$.  Then, $v \in \mathcal E_0(\Omega)$ and 
$$ \int_\Omega  (-\rho) (dd^c v)^n \leq \int_\Omega (-\rho)  (dd^c u)^n,$$
for every $\rho \in \mathcal F \text{-}PSH^-(\Omega) \cap L^\infty(\Omega)$.  
Moreover, if $u=v$ in $\{u>-\varepsilon_0\}$ for some $\varepsilon_0>0$ then 
$$ \int_\Omega  (dd^c v)^n = \int_\Omega   (dd^c u)^n.$$
\end{proposition}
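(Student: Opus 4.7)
The plan is to reduce every assertion to a classical Cegrell-type result on the ambient bounded hyperconvex domain $\Omega'$, by extending $u$, $v$, and $\rho$ to $\Omega'$ through the gluing construction already used in the proof of Proposition~\ref{pro1a}. The extensions will then be compared by the standard Cegrell inequality on $\Omega'$, and the conclusion transferred back to $\Omega$ via locality of the complex Monge-Amp\`ere on the $\mathcal F$-open set where the maxima unfold. The assertion $v\in\mathcal E_0(\Omega)$ is the easier half: boundedness of $v$ is immediate from $-\|u\|_\infty \le u \le v < 0$, while the $\mathcal E_0$-boundary condition follows from $u\le v$, since $\{v<-\varepsilon\}\subset\{u<-\varepsilon\}$ forces the same $\delta$ for $v$ as for $u$; the missing finiteness of $\int_\Omega (dd^c v)^n$ will drop out of the main integral inequality by taking $\rho\equiv -1$.

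To produce the integral inequality, for each bounded negative $\mathcal F$-plurisubharmonic function $w$ on $\Omega$ with the $\mathcal E_0$-type boundary condition I would choose constants $A,B>0$ (depending on $\|w\|_\infty$ and on the $\varepsilon$--$\delta$ data of Definition~\ref{def1}) and set
$$
\hat w :=
\begin{cases} \max(w + A\gamma_\Omega,\,-B) & \text{in } \Omega, \\ -B & \text{in } \Omega' \setminus \Omega, \end{cases}
$$
so that, by the gluing principle (Proposition~2.3 in \cite{KS14} and Proposition~2.14 in \cite{KFW11}), $\hat u,\hat v,\hat\rho \in PSH^-(\Omega')\cap L^\infty(\Omega')$ with $\hat u \le \hat v$ on $\Omega'$. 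Cegrell's classical comparison inequality on $\Omega'$ then provides
$$
\int_{\Omega'}(-\hat\rho)(dd^c\hat v)^n \le \int_{\Omega'}(-\hat\rho)(dd^c\hat u)^n.
$$
On the $\mathcal F$-open set $\Omega\cap\{\gamma_\Omega>-1+\delta_0\}$, which by the $\mathcal E_0$-condition contains every sublevel set carrying Monge-Amp\`ere mass, the maxima unfold and $\hat w = w + A\gamma_\Omega$. Expanding the $n$-fold wedge products binomially and invoking locality of the complex Monge-Amp\`ere on $\mathcal F$-open sets, the mixed terms involving $dd^c\gamma_\Omega$ match across the two sides and cancel, leaving
$$
\int_\Omega(-\rho)(dd^cv)^n \le \int_\Omega(-\rho)(dd^cu)^n.
$$

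For the equality assertion, suppose $u=v$ on the $\mathcal F$-open set $\{u>-\varepsilon_0\}$. Locality of the complex Monge-Amp\`ere on this $\mathcal F$-open set already yields $\mathbf{1}_{\{u>-\varepsilon_0\}}(dd^cu)^n = \mathbf{1}_{\{u>-\varepsilon_0\}}(dd^cv)^n$, and the extensions $\hat u$ and $\hat v$ built above then coincide outside the compactly placed set $\{u\le -\varepsilon_0\}\Subset\Omega'\cap\{\gamma_\Omega>-1+\delta_0\}$; the classical Cegrell equality for bounded plurisubharmonic functions on $\Omega'$ that agree outside a compact set then gives equality of total Monge-Amp\`ere masses on $\Omega'$, which transfers back to the desired equality on $\Omega$ by the same expansion-plus-locality argument as before. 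The main obstacle will be the bookkeeping in the previous paragraph: one has to calibrate $A,B$ and the threshold $\delta_0$ so that the Cegrell inequality on $\Omega'$ corresponds term-by-term with the $\mathcal F$-integral inequality on $\Omega$, and one must rely throughout on plurifine locality (Theorems~3.7--3.9 in \cite{KFW11}, Theorems~2.17 and 4.5 in \cite{KW14}) and on the quasi-Lindel\"of property, because the relevant open sets are $\mathcal F$-open rather than Euclidean open.
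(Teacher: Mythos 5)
Your overall strategy (glue $u,v,\rho$ to the ambient hyperconvex domain $\Omega'$, compare there, transfer back) starts from the same place as the paper, and the easy parts are fine: $v$ inherits the $\mathcal E_0$ sublevel condition because $\{v<-\varepsilon\}\subset\{u<-\varepsilon\}$, and $\int_\Omega(dd^cv)^n<+\infty$ does drop out of the main inequality with $\rho\equiv-1$. But the core of the argument has a genuine gap. In the general case the glued functions satisfy $\hat u\le\hat v$ and are bounded negative plurisubharmonic on $\Omega'$, but they are \emph{not} in $\mathcal E_0(\Omega')$ (they equal $-B$, not $0$, off $\Omega$) and their difference need \emph{not} have compact support in $\Omega'$ (take $\Omega=\Omega'$, $\gamma_\Omega\equiv-\tfrac12$, $v=u/2$: then $\hat v-\hat u$ is supported on essentially all of $\Omega'$). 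For arbitrary bounded $\hat u\le\hat v<0$ the inequality $\int_{\Omega'}(-h)(dd^c\hat v)^n\le\int_{\Omega'}(-h)(dd^c\hat u)^n$ is simply false --- on the unit ball take $\hat u\equiv-1$ and $\hat v=\tfrac12|z|^2-1$, so $(dd^c\hat u)^n=0$ while $(dd^c\hat v)^n>0$ --- so ``Cegrell's classical comparison inequality'' cannot be invoked without a zero-boundary-value or compactly-supported-difference hypothesis. That hypothesis is available only in the special case $u=v$ on $\{u>-\varepsilon_0\}$. The paper's proof therefore has an essential second step that your proposal omits: it first proves the case where $u=v$ near the $\mathcal F$-boundary (there $\hat v-\hat u$ is compactly supported, by the $\mathcal E_0$-condition together with a function $\psi\in\mathcal E_0(\Omega')\cap\mathcal C(\Omega')$), and then reduces the general case to it via $v_j:=\max(u,\lambda v-\tfrac1j)$ with $\lambda\in(0,1)$, using the locality Theorem~4.8 of \cite{KW14} to identify $(dd^cv_j)^n=\lambda^n(dd^cv)^n$ on $\{u<\lambda v-\tfrac1j\}$ and then letting $j\to\infty$ and $\lambda\to1$.

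The transfer mechanism you describe --- ``expand the wedge powers binomially and let the mixed $dd^c\gamma_\Omega$ terms cancel'' --- is also not valid. From an inequality between the sums $\sum_k\binom nkA^k\int(-\hat\rho)\,(dd^c\,\cdot\,)^{n-k}\wedge(dd^c\gamma_\Omega)^k$ you cannot isolate the $k=0$ term: the mixed terms for $u$ and for $v$ are not equal (at best they obey one-sided inequalities all pointing the same way), and in addition your weight after gluing is $\hat\rho\ne\rho$. The paper avoids this entirely: it telescopes the difference as $(dd^c\hat v)^n-(dd^c\hat u)^n=dd^c(\hat v-\hat u)\wedge\sum_{j=0}^{n-1}(dd^c\hat u)^j\wedge(dd^c\hat v)^{n-1-j}$ and integrates by parts \emph{once} (legitimate because $\hat v-\hat u$ is compactly supported in the reduced case), moving $dd^c$ onto the weight to obtain $\int_\Omega(v-u)\,dd^c\rho\wedge[\cdots]\ge0$ with the original weight $\rho$; the equality of total masses then follows by taking $\rho\equiv-1$, for which $dd^c\rho=0$. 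To repair your argument you would need to replace both the appeal to the ``classical'' inequality and the binomial cancellation by these two devices (or equivalents), relying on Proposition~2.3 of \cite{KS14} and Theorem~4.8 of \cite{KW14} exactly where the paper does.
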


\begin{proof} 
Let $\Omega'$ be a bounded hyperconvex domain in $\mathbb C^n$ and let  $\gamma_\Omega \in PSH^-(\Omega') \cap L^\infty( \Omega')$ such that   $\Omega= \Omega' \cap \{\gamma_\Omega >-1\} $ and $-\gamma_\Omega  \in \mathcal F\text{-}PSH(\Omega)$. 
Fix $\varepsilon>0$. Choose $\delta>0$ such that $$\overline{\Omega \cap \{u<-\varepsilon\}} \subset \Omega' \cap \{\gamma_\Omega >-1+\delta\}.$$ 
Since $u\leq v<0$ in $\Omega$, we get 
$$\overline{\Omega \cap \{v<-\varepsilon\}} \subset  \overline{\Omega \cap \{u<-\varepsilon\}} \subset  \Omega ' \cap \{\gamma_\Omega >-1+\delta\}.$$ 
It remains to prove that 
$$ \int_\Omega  (-\rho) (dd^c v)^n \leq \int_\Omega (-\rho)  (dd^c u)^n,$$
for every $\rho \in \mathcal F \text{-}PSH^-(\Omega) \cap L^\infty(\Omega)$.  
We consider two cases follows.

{\em Case 1.} $u=v$ in $\Omega\cap \{u>-\varepsilon_0\}$ for some $\varepsilon_0>0$. Let $\psi \in \mathcal E_0(\Omega')\cap \mathcal C(\Omega')$. Choose $\delta_0>0$ such that 
$$ \Omega \cap  (\{ \psi >-2\delta_0\} \cup \{ \gamma_\Omega < -1+2 \delta_0 \})  \subset \Omega\cap \{u>-\varepsilon_0\} .$$ 
Without loss of generality we can assume that $-1\leq u \leq  v < 0$  and $-1\leq \rho \leq 0$  in $\Omega$.
Put  
$$f := 
\begin{cases}
\max(- \frac{1}{\delta_0}, u+  \frac{1}{\delta_0}  \gamma_\Omega ) & \text{ in } \Omega
\\ -  \frac{1}{\delta_0}& \text{ in } \Omega' \backslash \Omega
\end{cases}, \ 
g:= \begin{cases}
\max(- \frac{1}{\delta_0}, v+  \frac{1}{\delta_0}  \gamma_\Omega ) & \text{ in } \Omega
\\ -  \frac{1}{\delta_0}& \text{ in }   \Omega' \backslash \Omega
\end{cases}
$$ 
and 
$$ 
\varphi:= \begin{cases}
\max(- \frac{1}{\delta_0}, \rho +  \frac{1}{\delta_0} \gamma_\Omega ) & \text{ in } \Omega
\\ -  \frac{1}{\delta_0}& \text{ in } \Omega ' \backslash \Omega.
\end{cases} $$
From Proposition 2.3 in \cite{KS14} and Proposition 2.14 in \cite{KFW11} we get $f,g,\varphi \in PSH(\Omega ')$.  
By Theorem 4.8  in \cite{KW14} we have 
$$(dd^c u)^n =(dd^c v)^n \text{ in } \Omega \cap \{ \gamma_\Omega  <-1+2\delta_0\}.$$
Since $\rho=h- \frac{1}{\delta_0} \gamma_\Omega $, $u=f- \frac{1}{\delta_0}  \gamma_\Omega $, $v=g- \frac{1}{\delta_0} \gamma_\Omega $ in $\{ \gamma_\Omega  >-1+\delta_0\}$  and $f=g$ in $ \{\psi>-2\delta_0\} \cup \{ \gamma_\Omega <-1+2\delta_0\}$, 
by integration by parts yields
\begin{align*}
& \int_\Omega   \rho [(dd^c v)^n -(dd^c u)^n ] 
= \int_{\{ \gamma_\Omega  >-1+\delta_0 \}}  \rho [(dd^c v)^n -(dd^c u)^n ] 
\\& = \int_{\{ \gamma_\Omega  >-1+\delta_0\}}    ( \varphi - \frac{1}{\delta_0}  \gamma_\Omega  ) [(dd^c (g- \frac{1}{\delta_0} \gamma_\Omega ) )^n -(dd^c (f- \frac{1}{\delta_0}  \gamma_\Omega ) )^n ] 
\\ &= \int_{ \Omega '}  ( \varphi - \frac{1}{\delta_0}  \gamma_\Omega  ) [(dd^c (g- \frac{1}{\delta_0}  \gamma_\Omega ) )^n -(dd^c (f- \frac{1}{\delta_0} \gamma_\Omega ) )^n ] 
\\ & = \int_{ \Omega'}  ( \varphi - \frac{1}{\delta_0} \gamma_\Omega  ) dd^c (g-f) \wedge [\sum_{j=0}^{n-1} (dd^c  (f- \frac{1}{\delta_0}  \gamma_\Omega  ) )^j \wedge (dd^c (g- \frac{1}{\delta_0} \gamma_\Omega ) )^{n-j-1} ] 
\\ & = \int_{ \Omega'}(g-f) dd^c  ( \varphi - \frac{1}{\delta_0}  \gamma_\Omega )   \wedge [\sum_{j=0}^{n-1} (dd^c  (f- \frac{1}{\delta_0}  \gamma_\Omega  ) )^j \wedge (dd^c (g- \frac{1}{\delta_0} \gamma_\Omega  ) )^{n-j-1} ] 
\\& = \int_\Omega (v-u) dd^c \rho  \wedge [\sum_{j=0}^{n-1} (dd^c u)^j \wedge (dd^cv )^{n-j-1} ] 
\geq 0.
\end{align*} 
This follows that 
$$ \int_\Omega  (dd^c u)^n = \int_\Omega   (dd^c v)^n$$ 
and 
$$\int_\Omega (- \rho) (dd^c v)^n \leq \int_\Omega (-\rho)  (dd^c u)^n.$$

{\em Case 2.}  The general case.  Fix $\lambda \in (0,1)$ and define  
$$v_{j}=\max(u , \lambda  v- \frac{1}{j}), \text{ where } j\in\mathbb N^*.$$
Since $u=v_j$ in  $\{u> -\frac{1}{j} \}$, by the case 1 and Theorem 4.8 in \cite{KW14} we get 
\begin{align*}
\int_\Omega (- \rho) (dd^c u)^n 
& \geq \int_\Omega (-\rho)  (dd^c v_j)^n 
\\&  \geq   \int_{\{u<v_j\}} (-\rho)  (dd^c v_j)^n 
= \lambda  ^n \int_{\{u< \lambda  v -\frac{1}{j}\}} (-\rho)  (dd^c v)^n .
\end{align*}
It follows that 
\begin{align*}
\int_\Omega (- \rho) (dd^c u)^n 
& \geq \sup_{\lambda \in (0,1) } \left[ \lambda  ^n  \sup_{j\geq 1}   \int_{\{u< \lambda  v -\frac{1}{j}\}} (-\rho)  (dd^c v)^n \right]
\\& = \sup_{\lambda \in (0,1) } \left[ \lambda  ^n    \int_{\Omega} (-\rho)  (dd^c v)^n \right] 
=\int_{\Omega} (-\rho)  (dd^c v)^n.
\end{align*}
The proof is complete.
\end{proof}

\section{The class $\mathcal F_p(\Omega)$}

\begin{definition} \label{def1a}
{\rm 
Let $\Omega$ be a bounded $\mathcal F$-hyperconvex  domain in $\mathbb C^n$ and let $p>0$. Denote by  $\mathcal F_p (\Omega)$ is the family of negative $\mathcal F$-plurisubharmonic functions $u$ defined on $\Omega$ such that there exist a decreasing sequence $\{u_j\} \subset \mathcal E_0(\Omega)$ that converges pointwise to $u$ on $\Omega$ and 
$$\sup_{j\geq 1} \int_\Omega (1+(-u_j)^p)  (dd^c u_j)^n<+\infty.$$
} \end{definition}

\begin{remark}{\rm 
If $u\in \mathcal F_p(\Omega)$ then $u\in  \mathcal F_q(\Omega)$ for all $q\in (0,p)$.  
}\end{remark}

\begin{proposition} \label{pro2a}
Let $\Omega$ be a bounded $\mathcal F$-hyperconvex domain in $\mathbb C^n$ and let $p>0$.  Assume that  $u\in \mathcal F_p (\Omega)$   and  $\{u_j\} \subset \mathcal E_0(\Omega)$ such that $u_j \searrow u$ on $\Omega$ and 
$$\sup_{j\geq 1} \int_\Omega (1+(-u_j)^p)  (dd^c u_j)^n<+\infty.$$
Then, 
$$
\int_{\{ u>-\infty\}}   (dd^c u)^n 
= \sup_{j\geq1}\int_\Omega   (dd^c u_j)^n
.$$  
Moreover,   if $u$ is bounded then   
$$\int_\Omega (- v)  (dd^c u)^n 
= \sup_{j\geq1}\int_\Omega (-v) (dd^c u_j)^n, $$
for every  $v\in \mathcal F\text{-}PSH^-(\Omega) \cap L^\infty(\Omega)$.
\end{proposition}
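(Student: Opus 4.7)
The plan is to reduce to bounded truncations, pass to the limit in the approximating index $j$ using the plurifine Bedford--Taylor theorem for bounded decreasing sequences, and send the truncation level $k\to\infty$ while controlling tails via the $p$-energy hypothesis. Set $u^k:=\max(u,-k)$, $u_j^k:=\max(u_j,-k)$ and $N:=\sup_j\int_\Omega(1+(-u_j)^p)(dd^c u_j)^n$. By Proposition \ref{pro1b} applied to $u_j\le u_j^k<0$, each $u_j^k\in\mathcal E_0(\Omega)$ with $\int(dd^c u_j^k)^n\le\int(dd^c u_j)^n\le N$, while Chebyshev yields the uniform tail estimate $\int_{\{u_j\le -k\}}(dd^c u_j)^n\le N/k^p$.

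The crucial estimate is the uniform $p$-energy bound $\int_\Omega(-u^k)^p(dd^c u^k)^n\le CN$ independent of $k$. For $p\in(0,1]$ the function $\chi(t):=-(-t)^p$ is convex increasing on $\mathbb R^-$, so $\chi\circ u_j\in\mathcal F\text{-}PSH^-(\Omega)\cap L^\infty(\Omega)$ by Proposition \ref{Prrrr01010111}; applying Proposition \ref{pro1b} with $\rho:=\chi\circ u_j$ and using $(-u_j^k)^p\le(-u_j)^p$ yields $\int(-u_j^k)^p(dd^c u_j^k)^n\le N$ for all $j,k$. For $p>1$ the same bound, up to a dimensional constant, follows from the standard Cegrell-type estimate $\int(-v)^p(dd^c v)^n\le D_{p,n}\int(-u)^p(dd^c u)^n$ for $u,v\in\mathcal E_0$ with $u\le v$, whose proof by iterated integration by parts adapts to the plurifine setting using the techniques already employed in Proposition \ref{pro1b}. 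A Fatou-type upgrade of Proposition \ref{pr052}, applied to the non-decreasing family of $\mathcal F$-continuous bounded functions $f_i:=(-u_i^k)^p$ converging pointwise to $(-u^k)^p$, then gives
$$\int_\Omega(-u^k)^p(dd^c u^k)^n\le\liminf_j\int_\Omega(-u_j^k)^p(dd^c u_j^k)^n\le CN$$
uniformly in $k$. Since $u^k\equiv -k$ on $\{u\le -k\}$, this yields the tail estimate $\int_{\{u\le -k\}}(dd^c u^k)^n\le CN/k^p$.

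For the first identity, plurifine locality (Theorem 4.8 in \cite{KW14}) gives $(dd^c u^k)^n=(dd^c u)^n$ on the $\mathcal F$-open set $\{u>-k\}$, whence $\int(dd^c u^k)^n=\int_{\{u>-k\}}(dd^c u)^n+\int_{\{u\le -k\}}(dd^c u^k)^n$, and letting $k\to\infty$ together with the tail estimate gives $\lim_k\int(dd^c u^k)^n=\int_{\{u>-\infty\}}(dd^c u)^n$. The same locality-plus-tail decomposition applied to $u_j$ yields $\int(dd^c u_j)^n\le\int(dd^c u_j^k)^n+N/k^p$; the plurifine Bedford--Taylor theorem (Theorem 4.5 in \cite{KS14}) for the bounded decreasing sequence $u_j^k\searrow u^k$ gives $\lim_j\int(dd^c u_j^k)^n=\int(dd^c u^k)^n$, and taking $\sup_j$ and $k\to\infty$ produces $\sup_j\int(dd^c u_j)^n\le\int_{\{u>-\infty\}}(dd^c u)^n$. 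The reverse inequality follows from Proposition \ref{pr052} with $f\equiv 1$ combined with Proposition \ref{pro1b}, which together give $\int(dd^c u^k)^n\le\liminf_j\int(dd^c u_j^k)^n\le\sup_j\int(dd^c u_j)^n$.

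For the bounded case, if $u\ge -M$ then $u_j\ge u\ge -M$, so $\{u_j\}$ is uniformly bounded; the plurifine Bedford--Taylor theorem yields $\int f(dd^c u_j)^n\to\int f(dd^c u)^n$ for every bounded $\mathcal F$-continuous $f$, and with $f:=-v$ the monotonicity of $\int(-v)(dd^c u_j)^n$ in $j$ (Proposition \ref{pro1b}) upgrades the limit to a supremum. The main technical obstacle is the uniform $p$-energy bound on $u^k$: for $p>1$ it requires the Cegrell-type integration-by-parts inequality beyond the direct scope of Proposition \ref{pro1b}, and the passage $j\to\infty$ at fixed $k$ needs a careful Fatou-type upgrade of Proposition \ref{pr052}. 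Once this bound is established, the remaining steps are routine combinations of plurifine locality and the tail estimate.
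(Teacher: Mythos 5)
The central step of your argument --- that for the bounded truncations $u_j^k\searrow u^k$ one has $\lim_j\int_\Omega(dd^c u_j^k)^n=\int_\Omega(dd^c u^k)^n$, and more generally that $\int_\Omega f\,(dd^c u_j)^n\to\int_\Omega f\,(dd^c u)^n$ for every bounded $\mathcal F$-continuous $f$ --- does not follow from Theorem 4.5 in \cite{KS14}. That theorem gives only $\mathcal F$-\emph{local vague} convergence of the measures, which yields the lower semicontinuity $\int f\,(dd^c u^k)^n\le\liminf_j\int f\,(dd^c u_j^k)^n$ (this is exactly Proposition \ref{pr052}) but says nothing about the reverse inequality: for a decreasing sequence, Monge--Amp\`ere mass can escape to $\partial_{\mathcal F}\Omega$, and a weight such as $f\equiv1$ or $f=-v$ has no compact support in $\Omega$, so vague convergence does not control the total integrals. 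Ruling out this escape of mass is the actual content of the proposition, and it is precisely where the hypothesis $u_j\in\mathcal E_0(\Omega)$ must be used. The paper's proof does this by fixing $v\in\mathcal E_0(\Omega)$, using the defining boundary condition $\overline{\Omega\cap\{v<-2/k\}}\subset\Omega'\cap\{\gamma_\Omega>-1+2\delta_k\}$ to build cutoffs $\chi_k$ supported well inside $\Omega$, gluing $u_j+\delta_k^{-1}\gamma_\Omega$ to plurisubharmonic functions on the ambient hyperconvex domain $\Omega'$ so that the classical Bedford--Taylor convergence of \cite{BT87} applies there, and estimating the remainder by $\frac{2}{k}\sup_j\int_\Omega(dd^c u_j)^n$ since $\{\chi_k\neq1\}\subset\{v\ge-\frac{2}{k}\}$; general bounded $v$ (in particular $v\equiv-1$, which gives the unweighted mass identity) is then reached via $\max(v,k\rho)$ with $\rho\in\mathcal E_0(\Omega)$ and Proposition \ref{pro1b}. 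None of this boundary control appears in your proposal, so the inequality $\sup_j\int_\Omega(dd^c u_j)^n\le\int_{\{u>-\infty\}}(dd^c u)^n$ and the entire bounded case remain unproved.

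A second, smaller issue: for $p>1$ you require the uniform bound $\int_\Omega(-u^k)^p(dd^c u^k)^n\le CN$ and appeal to a Cegrell-type energy inequality ``adapted to the plurifine setting''; that adaptation is nontrivial and is not among the tools established in the paper. It is also unnecessary: since $u\in\mathcal F_p(\Omega)$ implies $u\in\mathcal F_{\min(p,1)}(\Omega)$, you can run the tail estimate with the exponent $\min(p,1)$ throughout, for which the convexity argument via Proposition \ref{Prrrr01010111} combined with Proposition \ref{pro1b} suffices --- this is what the paper does. Your Fatou-type diagonal argument with the increasing weights $(-u_i^k)^{\min(p,1)}$ is sound, as are the Chebyshev tail bounds, the locality of $(dd^c\cdot)^n$ on $\{u>-k\}$, and the lower bound obtained from Proposition \ref{pr052}; the proof is incomplete only, but essentially, at the points indicated above.
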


\begin{proof}   
We consider two cases.

{\em Case 1.}   $u$ is bounded. 
First, we claim that if  $v \in \mathcal E_0(\Omega)$ then 
\begin{equation}\label{eq-----1} 
\int_\Omega (- v)  (dd^c u)^n 
= \sup_{j\geq1}\int_\Omega (-v) (dd^c u_j)^n.
\end{equation} 
Indeed,  without loss of generality we can assume that $-1\leq u\leq u_j <0$   in $\Omega$.  Let $\Omega'$ be a bounded hyperconvex domain in $\mathbb C^n$ and let  $\gamma_\Omega \in PSH^-(\Omega') \cap L^\infty( \Omega')$ such that   $\Omega= \Omega' \cap \{\gamma_\Omega >-1\} $ and $-\gamma_\Omega  \in \mathcal F\text{-}PSH(\Omega)$.  
Let $\{\delta_k\}$ be a decreasing sequence  of positive real numbers such that $\delta_k \searrow 0$ and $$\overline{\Omega\cap \{ v < - \frac{2}{k}\}} \subset   \Omega ' \cap \{\gamma_\Omega  >-1+ 2\delta_k\} \text{ for all } k\geq 1.$$
Define 
$$
\chi_k:=
\begin{cases}
 \max( \min( -k v-1, \frac{1}{\delta_k}( 1+\gamma_\Omega )  - 1 , 1),0) & \text{ in }\Omega;
\\    0 & \text{ in } \mathbb C^n \backslash \Omega.
 \end{cases} 
 $$
It is clear that $\chi_k$ is  $\mathcal F$-continuous  function with compact support on  $\Omega'$. Fix $k\geq 1$.
Put 
$$f := 
\begin{cases}
\max(-  \frac{1}{\delta_k}, u+  \frac{1}{\delta_k} \gamma_\Omega ) & \text{ in } \Omega
\\ -   \frac{1}{\delta_k}& \text{ in }  \Omega ' \backslash \Omega
\end{cases}$$ 
and 
$$
f_j := 
\begin{cases}
\max(-  \frac{1}{\delta_k}, u_j+   \frac{1}{\delta_k} \gamma_\Omega  ) & \text{ in } \Omega
\\ -  \frac{1}{\delta_k} & \text{ in }   \Omega' \backslash \Omega.
\end{cases}
$$ 
By Proposition 2.3 in \cite{KS14} and Proposition 2.14 in \cite{KFW11} it follows that  $f,f_j  \in PSH^-(\Omega ') \cap L^\infty( \Omega')$. Since $u=f - \frac{1}{\delta_k}\gamma_\Omega $, $u_j=f_j - \frac{1}{\delta_k} \gamma_\Omega $
in $\{\gamma_\Omega  >-1 + \delta_k \}$ and $\{\chi _k \neq 0\} \subset \{\gamma_\Omega >-1 +  \delta_k \}$, by \cite{BT87} we get 
\begin{align*}
\int_\Omega \chi_k  (-v) (dd^c u)^n 
&=\int_{\Omega'}  \chi_k  (-v) (dd^c (f -\frac{1}{ \delta_k} \gamma_\Omega ))^n  
\\& =\lim_{j\to+\infty} \int_{\Omega'}  \chi_k  (-v ) (dd^c (f_j -\frac{1}{ \delta_k} \gamma_\Omega  ))^n  
\\& = \lim_{j\to+\infty} \int_\Omega \chi_k  (- v) (dd^c u_j)^n .
\end{align*}
Moreover, since $\{\chi_k \neq 1\} \subset \{ v \geq  -\frac{2}{k}\} $, we get 
\begin{align*}
& \limsup_{j\to+\infty} \int_\Omega  (-v) (dd^c u_j)^n 
 \geq \int_\Omega \chi_k  (-v) (dd^c u)^n 
\\& \geq  \liminf_{j\to+\infty} \int_\Omega  (- v ) (dd^c u_j)^n  
- \limsup_{j\to+\infty} \int_\Omega  (1-\chi_k) (-v) (dd^c u_j)^n
\\& \geq  \liminf_{j\to+\infty} \int_\Omega  (-v) (dd^c u_j)^n  - \limsup_{j\to+\infty} \int_{\{ v \geq -\frac{2}{k}\}}   (- v ) (dd^c u_j)^n
\\& \geq  \liminf_{j\to+\infty} \int_\Omega  (-v) (dd^c u_j)^n  -  \frac{2}{k} \sup_{j\geq 1} \int_{\Omega}  (dd^c u_j)^n. 
\end{align*}
Let $k\nearrow +\infty$, by Proposition \ref{pro1b} we obtain that 
$$\int_\Omega  (-v) (dd^c u)^n
= \lim_{j\to+\infty} \int_\Omega  (-v) (dd^c u_j)^n
= \sup_{j\geq 1} \int_\Omega  (-v) (dd^c u_j)^n .$$
This proves the claim.  
Now, fix $\rho \in \mathcal E_0(\Omega)$ and define $v_k:=\max(v,k\rho)$, where $k\in \mathbb N^*$. By Proposition \ref{pro1b} it implies that $v_k \in \mathcal E_0(\Omega)$. Hence, by \eqref{eq-----1} and  Proposition \ref{pro1b}  we get 
\begin{equation} \label{eqho1}
\begin{split}
\int_\Omega  (-v) (dd^c u)^n 
&=\sup_{k\geq 1} \int_\Omega  (-v_k) (dd^c u)^n
\\&  = \sup_{k\geq 1} \left[ \sup_{j\geq 1} \int_\Omega  (-v_k) (dd^c u_j)^n \right] 
= \sup_{j\geq 1} \int_\Omega  (-v) (dd^c u_j)^n.
\end{split} 
\end{equation}

{\em Case 2.} The general case. Let $k\in\mathbb N^*$.  
Since $u_j \leq \max(u_j,-k)<0$ in $\Omega$, by Proposition \ref{pro1b} we have $\max(u_j,-k)\in \mathcal E_0(\Omega)$ and 
\begin{align*}
\sup_{j \geq 1}& \int_\Omega [1+(-\max(u_j,-k))^p] (dd^c \max(u_j,-k) )^n 
\\& \leq (1+k^p)  \sup_{j \geq 1}  \int_\Omega   (dd^c u_j)^n
<+\infty .
\end{align*}
Therefore,  $\max(u,-k) \in \mathcal F_p(\Omega)$. Hence, by \eqref{eqho1} and   Proposition \ref{pro1b}  we get 
\begin{align*}
\int_\Omega  (dd^c \max(u,-k))^n 
=\sup_{j\geq 1} \int_\Omega  (dd^c \max(u_j,-k))^n
=\sup_{j\geq 1}  \int_\Omega  (dd^c u_j )^n .
\end{align*}
Moreover, by Proposition \ref{Prrrr01010111} we have  $-(- u_m)^{\min(p,1)} \in \mathcal F\text{-} PSH^-(\Omega)$ for all $m\geq 1$. Hence, again by Proposition \ref{pro1b} it implies that 
\begin{align*}
\int_\Omega (- u)^{\min(p,1)} (dd^c \max(u,-k))^n 
&=\sup_{m\geq 1}  \int_\Omega   (- u_m)^{\min(p,1)}  (dd^c \max(u,-k))^n 
\\ & =\sup_{m\geq 1}  \left[ \sup_{j\geq 1} \int_\Omega (- u_m)^{\min(p,1)}  (dd^c \max(u_j ,-k))^n \right] 
\\ & \leq \sup_{m\geq 1}  \left[ \sup_{j\geq 1} \int_\Omega (- u_m)^{\min(p,1)}   (dd^c u_j )^n \right] 
\\& = \sup_{j\geq 1} \int_\Omega  (- u_j)^{\min(p,1)}  (dd^c u_j)^n 
\\& \leq  \sup_{j\geq 1} \int_\Omega  ( 1+ (- u_j)^{p})  (dd^c u_j)^n .
\end{align*}
It follows that
\begin{align*}
\int_{\{u\leq k\}}  (dd^c \max(u,-k))^n 
\leq \frac{1}{k^{\min(p,1)} }  
\sup_{j\geq 1} \int_\Omega  ( 1+ (- u_j)^{p})  (dd^c u_j)^n .
\end{align*}
Therefore, by Theorem 4.8 in \cite{KW14} we get 
\begin{align*}
\int_{\{ u>-\infty\}}   (dd^c u)^n 
& = \lim_{k\to+\infty} \int_{\{u>-k\}}  (dd^c \max(u,-k))^n 
\\& = \lim_{k\to+\infty} \int_\Omega  (dd^c \max(u,-k))^n 
= \sup_{j\geq 1}  \int_\Omega  (dd^c u_j )^n .
\end{align*}
The proof is complete. 
\end{proof}


\begin{proposition} \label{pro3}
Let $\Omega$ be a bounded $\mathcal F$-hyperconvex domain in $\mathbb C^n$ and let $p>0$.  Assume that  $u\in \mathcal F_p (\Omega)$    and   $v\in \mathcal F\text{-}PSH(\Omega)$ with $u\leq v<0$   then $v\in \mathcal F_{\min(p,1)}  (\Omega)$ and 
$$\int_{\{ v>-\infty\}}   (dd^c v)^n \leq  \int_{\{ u>-\infty\}}   (dd^c u)^n.$$
\end{proposition}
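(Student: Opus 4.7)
The plan is to transport the required decreasing approximation from $u$ to $v$ by a single maximum. Fix a sequence $\{u_j\}\subset \mathcal E_0(\Omega)$ witnessing $u\in\mathcal F_p(\Omega)$, so that $u_j\searrow u$ on $\Omega$ and $M:=\sup_{j\geq 1}\int_\Omega (1+(-u_j)^p)(dd^c u_j)^n<+\infty$. Set $v_j:=\max(v,u_j)$. Then $\{v_j\}$ is a decreasing sequence of $\mathcal F$-plurisubharmonic functions converging pointwise to $\max(v,u)=v$, with $u_j\leq v_j<0$ in $\Omega$, so Proposition \ref{pro1b} gives $v_j\in\mathcal E_0(\Omega)$. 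Thus $\{v_j\}$ is the natural candidate sequence placing $v$ in $\mathcal F_{\min(p,1)}(\Omega)$.

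The main step is a uniform bound on the $\min(p,1)$-energy of $v_j$. The function $\chi(t):=-(-t)^{\min(p,1)}$ is increasing and convex on $\mathbb R^-$ (convexity uses $\min(p,1)\leq 1$), so by Proposition \ref{Prrrr01010111} the composition $\rho_j:=\chi\circ u_j$ belongs to $\mathcal F\text{-}PSH^-(\Omega)$, and it is bounded because $u_j$ is. Applying Proposition \ref{pro1b} with $\rho=\rho_j$ and with $\rho\equiv -1$ gives
$$\int_\Omega (-u_j)^{\min(p,1)}(dd^c v_j)^n \leq \int_\Omega (-u_j)^{\min(p,1)}(dd^c u_j)^n,\qquad \int_\Omega(dd^c v_j)^n\leq \int_\Omega(dd^c u_j)^n.$$
Since $v_j\geq u_j$ forces $(-v_j)^{\min(p,1)}\leq (-u_j)^{\min(p,1)}$ pointwise, and since $\min(p,1)\leq p$ yields the elementary bound $(-u_j)^{\min(p,1)}\leq 1+(-u_j)^p$, combining the two displayed inequalities produces $\sup_{j\geq 1}\int_\Omega (1+(-v_j)^{\min(p,1)})(dd^c v_j)^n\leq 2M<+\infty$, and hence $v\in \mathcal F_{\min(p,1)}(\Omega)$.

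The total-mass estimate is then immediate from Proposition \ref{pro2a} applied to both sequences:
$$\int_{\{v>-\infty\}}(dd^c v)^n = \sup_{j\geq 1}\int_\Omega (dd^c v_j)^n \leq \sup_{j\geq 1}\int_\Omega (dd^c u_j)^n = \int_{\{u>-\infty\}}(dd^c u)^n,$$
where the middle inequality is the $\rho\equiv -1$ case already established.

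The subtlety I expect to be the main obstacle is that Proposition \ref{pro1b} requires a \emph{fixed} test weight $\rho$, while the natural quantity $(-v_j)^{\min(p,1)}$ depends on $j$. I circumvent this by weighting with $(-u_j)^{\min(p,1)}$ instead (whose $\mathcal F$-plurisubharmonicity is exactly what Proposition \ref{Prrrr01010111} supplies), and then relying on the monotonicity $v_j\geq u_j$ to recover the $v_j$-weight at no cost. The exponent is forced to drop from $p$ to $\min(p,1)$ precisely because $t\mapsto -(-t)^q$ is convex on $\mathbb R^-$ only when $q\leq 1$, which is the structural reason the conclusion puts $v$ in $\mathcal F_{\min(p,1)}$ rather than $\mathcal F_p$.
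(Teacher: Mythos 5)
Your proposal is correct and follows essentially the same route as the paper: the same candidate sequence $v_j=\max(u_j,v)$, membership in $\mathcal E_0(\Omega)$ via Proposition \ref{pro1b}, the convexity of $t\mapsto -(-t)^{\min(p,1)}$ via Proposition \ref{Prrrr01010111} to get a uniform $\min(p,1)$-energy bound, and Proposition \ref{pro2a} for the mass inequality. The only cosmetic difference is that you test Proposition \ref{pro1b} against the weight $(-u_j)^{\min(p,1)}$ and then use $v_j\geq u_j$, whereas the paper tests directly against $(-v_j)^{\min(p,1)}$; both are equally valid.
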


\begin{proof}
Let $\{u_j\}\subset \mathcal E_0(\Omega)$ such that $u_j \searrow u$ in $\Omega$ and 
$$\sup_{j\geq 1}  \int_\Omega (1+(-u_j)^p) (dd^c u_j)^n <+\infty.$$
Put $v_j:=\max(u_j,v)$. By Proposition \ref{pro1b} we have $v_j \in \mathcal E_0(\Omega)$.  Moreover, by  Proposition \ref{Prrrr01010111}
we have $-(-v_j)^{\min(p,1)} \in \mathcal F\text{-}PSH^-(\Omega)$. Hence,  again by Proposition \ref{pro1b} it implies that 
\begin{align*}
\sup_{j\geq 1} \int_\Omega [1+(-v_j)^{\min(p,1)}](dd^c v_j)^n 
&\leq \sup_{j\geq 1} \int_\Omega [1+(-v_j)^{\min(p,1)}](dd^c u_j)^n 
\\& \leq \sup_{j\geq 1}  \int_\Omega [2+(-u_j) ^p] (dd^c u_j)^n <+\infty.
\end{align*}
Since $v_j \searrow v$ in $\Omega$, it implies that 
$v\in \mathcal F_{\min(p,1)} (\Omega)$. Therefore, by Proposition \ref{pro1b} and Proposition \ref{pro2a} we obtain 
\begin{align*}
\int_{\{ v>-\infty\}}  (dd^c v)^n 
& =\sup_{j\geq 1} \int_\Omega   (dd^c v_j)^n 
\\& \leq \sup_{j\geq 1}  \int_\Omega  (dd^c u_j)^n
= \int_{\{ u>-\infty\}}   (dd^c u)^n.
\end{align*}
The proof is complete. 
\end{proof}

\begin{proposition} \label{pro4}
Let $\Omega$ be a bounded $\mathcal F$-hyperconvex domain in $\mathbb C^n$ and let $p>0$. Assume that  $u\in \mathcal F_{\min(p,1)} (\Omega)$ and $v\in \mathcal F\text{-}PSH^-(\Omega)$ such that   $(1+(-u)^p)  (dd^c u)^n \leq  (1+(-v)^p) (dd^c v)^n$ in $\Omega \cap \{u>-\infty\} \cap \{v>-\infty\}$.  
Then $u \geq v$ in $\Omega$. 
\end{proposition}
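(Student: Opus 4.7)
The plan is to argue by contradiction. Suppose $u\ge v$ fails on $\Omega$, so there exists $z_0\in\Omega$ with $-\infty<u(z_0)<v(z_0)\le 0$. By the $\mathcal F$-continuity of $u$ and $v$ one can find constants $k,\delta>0$ and an $\mathcal F$-open neighborhood $U\subset\Omega$ of $z_0$ on which $-k<u<v-\delta$. Since the one-variable function $s\mapsto(1+s^p)/(1+(s+\delta)^p)$ is continuous and strictly less than $1$ on $[0,k]$, one obtains
\[
\eta:=\sup_{U}\frac{1+(-v)^p}{1+(-u)^p}<1,
\]
so the weighted hypothesis yields the pointwise estimate $(dd^c u)^n\le\eta\,(dd^c v)^n$ on $U$.

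The main step is a Xing--Cegrell type integral comparison adapted to our classes. Take an approximating sequence $\{u_j\}\subset\mathcal E_0(\Omega)$ for $u\in\mathcal F_{\min(p,1)}(\Omega)$ and form the truncation $v_j:=\max(u_j,v-\delta)$; by Proposition~\ref{pro1b} we have $v_j\in\mathcal E_0(\Omega)$ with $u_j\le v_j<0$. Pulling $(u_j,v_j)$ back to honest bounded plurisubharmonic functions on an auxiliary bounded hyperconvex domain $\Omega'\supset\Omega$ through the $L^\infty$-extension trick of Propositions~\ref{pro1a}--\ref{pro1b}, and running Bedford--Taylor integration by parts there, one obtains
\[
\int_{U\cap\{u_j<v_j\}}(-\rho)(dd^c v_j)^n\le\int_{U\cap\{u_j<v_j\}}(-\rho)(dd^c u_j)^n
\]
for every $\rho\in\mathcal E_0(\Omega)\cap L^\infty(\Omega)$. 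Letting $j\to\infty$ via Proposition~\ref{pro2a} and combining with the pointwise estimate $(dd^c u)^n\le\eta(dd^c v)^n$ forces
\[
(1-\eta)\int_U(-\rho)(dd^c v)^n\le 0,
\]
so $\int_U(-\rho)(dd^c v)^n=0$; selecting $\rho\in\mathcal E_0(\Omega)$ with $\rho<0$ on $U$ (existence by Proposition~\ref{pro1a}) then yields $(dd^c v)^n|_U=0$, and the weighted hypothesis symmetrically forces $(dd^c u)^n|_U=0$.

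To close, I would apply Proposition~\ref{Prrrr1} with a strictly plurisubharmonic function $\varphi(z):=|z|^2-R$, with $R$ large enough for $\varphi<0$ on $\Omega$. The vanishing of the Monge--Amp\`ere masses of $u$ and $v$ on $U$, transferred back through the $\Omega'$-extension of $U$ and combined with the weighted hypothesis (which ties $(dd^c\varphi)^n$ to $(dd^c v)^n$ via a domination argument on $U$), should force $\int_{\Omega\cap\{-\infty<u<v\}}(dd^c\varphi)^n=0$, whence Proposition~\ref{Prrrr1} yields $u\ge v$ on $\Omega$, contradicting the existence of $z_0$.

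The principal obstacle is the Xing--Cegrell comparison of the second paragraph: adapting the classical Cegrell--Xing integration by parts to the plurifine class $\mathcal F_{\min(p,1)}$ does not follow directly from Bedford--Taylor and requires careful use of the $\Omega'$-extension trick of Propositions~\ref{pro1a}--\ref{pro1b} together with a delicate monotone-limit passage exploiting the tightness of $\mathcal F_{\min(p,1)}$; this is where the bulk of the technical work will sit.
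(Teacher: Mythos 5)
Your argument is local: you work in an $\mathcal F$-neighborhood $U$ of a hypothetical point where $u<v$, and you never use the fact that $u\in\mathcal F_{\min(p,1)}(\Omega)$, i.e.\ that $u$ is a decreasing limit of functions in $\mathcal E_0(\Omega)$ with uniformly bounded weighted energy. That membership is exactly the boundary information that makes the proposition true, and no purely local comparison of Monge--Amp\`ere measures near an interior point can replace it: for instance $u\equiv -2$ and $v\equiv -1$ on a ball satisfy $(1+(-u)^p)(dd^cu)^n=0\le (1+(-v)^p)(dd^cv)^n$ but $u<v$; the hypothesis $u\in\mathcal F_{\min(p,1)}(\Omega)$ is the only thing that excludes such examples, and it plays no role in your contradiction scheme. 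Concretely, two steps break down. First, the ``Xing--Cegrell'' inequality $\int_{U\cap\{u_j<v_j\}}(-\rho)(dd^cv_j)^n\le\int_{U\cap\{u_j<v_j\}}(-\rho)(dd^cu_j)^n$ is not available once you intersect with an arbitrary $\mathcal F$-open set $U$: the integration-by-parts proof of the comparison principle requires $u_j=v_j$ on the boundary of the integration region, and on $\partial_{\mathcal F}U\cap\{u_j<v_j\}$ you have no such matching; the localized inequality is false in general (mass of $v$ can concentrate inside $U$ while the mass of $u$ sits outside). Second, even granting $(dd^cv)^n|_U=(dd^cu)^n|_U=0$, nothing forces $\int_{U}(dd^c\varphi)^n=0$: the measure $(dd^c\varphi)^n$ for $\varphi(z)=|z|^2-R$ is a multiple of Lebesgue measure and is in no way dominated by $(dd^cv)^n$; maximality of $v$ on $U$ gives no control on the Lebesgue measure of $U$, so Proposition~\ref{Prrrr1} cannot be invoked this way.

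For contrast, the paper's proof is global and runs in the opposite direction. It sets $v_j:=\max(u,v+\tfrac1j\varphi)$ with $\varphi$ strictly plurisubharmonic and negative on $\Omega$, shows $(dd^cv_j)^n\ge(dd^cu)^n$ on $\{u>-\infty\}\cap\{v>-\infty\}$ using the weight inequality together with Proposition~\ref{pr051} and locality, and then invokes Proposition~\ref{pro3} --- this is where $u\in\mathcal F_{\min(p,1)}(\Omega)$ enters --- to get $\int_{\{v_j>-\infty\}}(dd^cv_j)^n\le\int_{\{u>-\infty\}}(dd^cu)^n$, hence \emph{equality} of the two measures. The strict plurisubharmonicity of $\varphi$ then yields $(dd^c(v+\tfrac1j\varphi))^n\ge(dd^cv)^n+j^{-n}(dd^c\varphi)^n$, from which $\int_{\{-\infty<u<v+\varphi/j\}}(dd^c\varphi)^n=0$, and letting $j\to\infty$ gives $\int_{\{-\infty<u<v\}}(dd^c\varphi)^n=0$, so Proposition~\ref{Prrrr1} applies. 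If you want to salvage your approach, you would have to replace the local neighborhood $U$ by the full set $\{u<v+\tfrac1j\varphi\}$ (or $\{u<\lambda v\}$) and feed the boundary behavior of $u$ into a genuine global comparison; at that point you are essentially reconstructing the paper's argument.
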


\begin{proof}
Let  $\varphi$ be smooth  strictly plurisubharmonic function in $\mathbb C^n$ such that $\Omega\subset \{\varphi <0\}$. Put $v_j:= \max(u, v+\frac{1}{j} \varphi)$ on $\Omega$, where $j\in \mathbb N^*$. 
First, we claim that 
$$(dd^c v_j)^n   \geq  (dd^c u)^n \text{ in }\Omega \cap \{u>-\infty\} \cap \{v>-\infty\} .$$
Indeed,  by the hypotheses it implies that 
\begin{align*}
(dd^c (v+\frac{1}{j} \varphi) )^n   
\geq   (dd^c v )^n   
\geq   \frac{1+(-u)^p}{1+(-v)^p } (dd^c u )^n    
\geq 1_{\{u\leq v+\frac{1}{j} \varphi\}} (dd^c u)^n 
\end{align*}
in $\Omega \cap \{u>-\infty\} \cap \{v>-\infty\}$. Hence, by Proposition \ref{pr051} we get 
$$ 1_{\{u\leq v +\frac{1}{j} \varphi \}}  (dd^c v_j )^n   \geq 1_{\{u\leq v +\frac{1}{j} \varphi \}} (dd^c u)^n \text{ in } \Omega \cap \{u>-\infty\} \cap \{v>-\infty\}.$$
Moreover, by Theorem 4.8  in \cite{KW14} we have 
$ (dd^c v_j )^n  = (dd^c u)^n$ in $\Omega \cap \{u> v +\frac{1}{j}\varphi \} \cap \{u>-\infty\} \cap \{v>-\infty\}$.
Therefore, 
$$(dd^c v_j )^n    \geq (dd^c u)^n  \text{ in } \Omega \cap \{u >-\infty\} \cap \{v> -\infty\} .$$
This proves the claim.
Since $v_j \geq u$ in $\Omega$, by Proposition \ref{pro3} we have 
\begin{align*}
\int_{\{ u>-\infty\}}  (dd^c u)^n \leq \int_{\{ v_j >-\infty\}}  (dd^c v_j )^n \leq \int_{\{ u>-\infty\}}  (dd^c u)^n<+\infty.
\end{align*}
It follows that $1_{\{ v_j >-\infty\}} (dd^c v_j)^n = 1_{\{ u>-\infty\}}  (dd^c u)^n$ in $\Omega$. Therefore, by Theorem 4.8 in \cite{KW14} we get 
\begin{align*}
\int_{\{-\infty< u<v_j\}}  (dd^c \varphi)^n 
&\leq j^n  \int_{\{-\infty< u<v_j\}} [ (dd^c (v+ \frac{1}{j} \varphi) )^n -   (dd^c v)^n ]
\\& \leq j^n  \int_{\{-\infty< u<v_j\}}  [(dd^c v_j )^n -   (dd^c u)^n] =0.
\end{align*}
Thus, 
$$
\int_{\{-\infty< u<v\}}  (dd^c \varphi)^n
= \sup _{j\geq 1} \int_{\{-\infty< u<v_j\}}  (dd^c \varphi)^n  
=0.
$$
From Proposition \ref{Prrrr1} we have 
$u\geq v$ on $\Omega$. 
The proof is complete.
\end{proof}

\section{Proof of theorem \ref{the1}}

\begin{proof} 
Let $\{\varphi_j\} \subset \mathcal E_0(\Omega)$ such that $\varphi_j \searrow u$ on $\Omega$ and 
$$\sup_{j\geq 1} \int_\Omega (1+(-\varphi_j)^p)  (dd^c \varphi_j)^n<+\infty.$$
By Proposition \ref{pr052} we have 
\begin{align*}
\int_{\{u>-\infty\}}  (1+(-u)^p)  (dd^c u)^n
& \leq \sup_{k\geq 1} \int_{\Omega \cap \{u>-\infty\}}  (1+(-\varphi_k )^p) (dd^c u)^n
\\ & \leq  \sup_{k \geq 1} \left[ \liminf_{j\to+\infty} \int_\Omega (1+(-\varphi_k)^p)  (dd^c \varphi_j)^n\right] 
\\ & \leq  \sup_{j \geq 1}   \int_\Omega (1+(-\varphi_j)^p)  (dd^c \varphi_j)^n 
<+\infty .
\end{align*}
Moreover, since the   measure $1_{\Omega\cap \{u>-\infty\}}  (1+(-u)^p) (dd^c u)^n$ vanishes on all pluripolar subsets of $\Omega_j$, by Theorem 4.10 in \cite{HH11} there exists $u_j \in \mathcal F_p(\Omega_j)$ such that 
$$ (1+(-u_j)^p) (dd^c u_j)^n=1_{ \Omega \cap \{u>-\infty\} }  (1 +(-u)^p) (dd^c u)^n \text{ in } \Omega_j. $$
By Theorem 4.8 in \cite{HH11}  we have  $u_j \geq u_{j+1}$ in $\Omega_{j+1}$. Moreover, since $u\in \mathcal F_{\min(p,1)}(\Omega)$,  by  Proposition \ref{pro4}  it implies that  $ u\geq   u_j$ in $\Omega$ for all $j\geq 1$. Let $v$ be the least $\mathcal F$-upper semi-continuous regularization of $\lim_{j\to+\infty} u_j$ on $\Omega$. By Theorem 3.9 in \cite{KFW11} we get $u_j \to v $ a.e in $\Omega$. 

We claim that $v\in \mathcal F_{\min(p,1)} (\Omega)$. Indeed, 
put $v_k:=\max(v,k \rho)$, where $k\in \mathbb N^*$. 
By Proposition \ref{pro1b} we have $v_k\in \mathcal E_0(\Omega)$. Since $\max(u_j, k\rho_j) \nearrow v_k$ a.e. in $\Omega$, by    Proposition  \ref{Prrrr01010111},   Proposition \ref{pr052} and Lemma 3.3 in \cite{ACCH}   we get
\begin{align*}
\int_\Omega [1+(-v_k)^{\min(p,1)}] (dd^c v_k)^n 
& \leq \liminf_{j\to+\infty} \int_\Omega [1+(-v_k )^ {\min(p,1)} ](dd^c \max(u_j, k\rho_j))^n 
\\& \leq \liminf_{j\to+\infty} \int_{\Omega_j}  [ 1+( -\max(u_j, k\rho_j))^{\min(p,1)}] (dd^c \max(u_j, k\rho_j))^n 
\\ & \leq \liminf_{j\to+\infty} \int_{\Omega_j} [1+(-u_j)^{\min(p,1)}] (dd^c u_j)^n 
\\ & \leq 2 \liminf_{j\to+\infty} \int_{\Omega_j} [1+(-u_j)^p] (dd^c u_j)^n 
\\& =  2 \int_{\Omega\cap  \{u>-\infty\}} (1+(-u)^p) (dd^c u)^n .
\end{align*}
Hence, 
\begin{align*}
\sup_{k\geq 1} \int_\Omega [1+(-v_k)^{\min(p,1)}] (dd^c v_k)^n 
\leq  2 \int_{\Omega\cap \{u>-\infty\}} (1+(-u)^p) (dd^c u)^n <+\infty.
\end{align*}
It follows that $v\in \mathcal F_{\min(p,1)}(\Omega)$. This proves the claim.
Since $v\geq u_j$  in $\Omega$, so 
$$  (1+(-v)^p) (dd^c u_j)^n \leq  (1 +(-u)^p) (dd^c u)^n    \text{ in }  \Omega \cap \{u>-\infty\}. $$ 
Moreover, since $u_j \nearrow v$ a.e. in $\Omega$, by Theorem 4.5 in \cite{KS14} we have 
$$ (1+(-v)^p) (dd^c v)^n \leq  (1 +(-u)^p) (dd^c u)^n \text{ in }  \Omega \cap \{u>-\infty\} \cap \{v>-\infty\} . $$ 
Hence, by Proposition 
\ref{pro4} it implies that $v \geq u $ in $\Omega$, and therefore, $u=v$ in $\Omega$. Thus,  $u_j\to u$ a.e. in $\Omega$. The proof is complete.
\end{proof}





\bibliographystyle{elsarticle-num}
\bibliography{<your-bib-database>}



\end{document}